\theoremstyle{plain}
\numberwithin{equation}{section}
\numberwithin{figure}{section}
\numberwithin{table}{section}
\newtheorem{theorem}{Theorem}[section]
\newtheorem{corollary}[theorem]{Corollary}
\newtheorem{lemma}[theorem]{Lemma}
\newtheorem{conjecture}[theorem]{Conjecture}
\newtheorem{proposition}[theorem]{Proposition}
\newtheorem*{pennerconst*}{Penner's Construction}
\newtheorem{step}{Step}
\newtheorem*{claim*}{Claim}
\newtheorem{case}{Case}
\newtheorem{proofpart}{Part}
\Crefname{step}{Step}{Step}
\newcommand{\Nul}{\operatorname{Nul}}
\newcommand{\calF}{\mathcal{F}}
\newcommand{\calB}{\mathcal{B}}
\newcommand{\calN}{\mathcal{N}}
\newcommand{\RR}{\mathbb{R}}
\newcommand{\CC}{\mathbb{C}}
\newcommand{\ZZ}{\mathbb{Z}}
\newcommand{\QQ}{\mathbb{Q}}
\newcommand{\vv}{{\bf v}}
\newcommand{\uu}{{\bf u}}
\newcommand{\ww}{{\bf w}}
\newcommand{\Mod}{\operatorname{Mod}} 
\newcommand{\PSL}{\operatorname{PSL}} 
\newcommand{\rank}{\operatorname{rank}}
\newcommand{\homeo}{\mathrel{\cong}} 
\newcommand{\hv}{\widehat{\vv}}
\newcommand{\hV}{\widehat{V}}
\newcommand{\Teich}{\mathrm{Teich}}
\newcommand{\e}{\mathbf{e}}
\newcommand\bG{\mathbf{G}}
\newcommand\odd{\mathrm{odd}}
\newcommand\even{\mathrm{even}}
\author{Bal\'azs Strenner}
\date{\today}
\title{Algebraic degrees of pseudo-Anosov stretch factors}
\begin{document}

\maketitle
\abstract{The motivation for this paper is to justify a remark of Thurston that the algebraic degree of stretch factors of pseudo-Anosov maps on a surface $S$ can be as high as the dimension of the Teichm\"uller space of $S$. In addition to proving this, we completely determine the set of possible algebraic degrees of pseudo-Anosov stretch factors on almost all finite type surfaces. As a corollary, we find the possible degrees of the number fields that arise as trace fields of Veech groups of flat surfaces homeomorphic to closed orientable surfaces. Our construction also gives an algorithm for finding a pseudo-Anosov map on a given surface whose stretch factor has a prescribed degree. One ingredient of the proofs is a novel asymptotic irreducibility criterion for polynomials.}

\section{Introduction}
\label{sec:introduction}

Let $S$ be a finite type surface. An element $f$ of the mapping class
group $\Mod(S)$ is \emph{pseudo-Anosov} if there is a representative
homeomorphism $\psi$, a number $\lambda>1$ called the stretch factor
(or dilatation), and a pair of transverse invariant singular measured
foliations $\calF^u$ and $\calF^s$ such that
$\psi(\calF^u) = \lambda\calF^u$ and
$\psi(\calF^s) =\lambda^{-1}\calF^s$. The stretch factor $\lambda$ is
an algebraic integer. The goal of this paper is to determine
the possible algebraic degrees of $\lambda$.

Studying the number $\lambda$ is motivated by its connections to algebraic
geometry, geometric topology and dynamics. For example, $\log(\lambda)$ is the
translation length of $f$ on Teichm\"uller space with the Teichm\"uller metric
and hence the length of a closed geodesic in moduli space \cite[Section
14.2.2]{FarbMargalit12}. The volume of the hyperbolic 3-manifold obtained as
the mapping torus of $f$ is also related to $\log(\lambda)$
\cite{KojimaMcShane14}. Finally, the extension field
$\QQ(\lambda+\lambda^{-1})$ plays a role in Teichm\"uller dynamics
\cite{McMullen03}.

\paragraph{Thurston's remark.}
Thurston announced the classification of elements of $\Mod(S)$ to
finite order, reducible and pseudo-Anosov elements in his seminal
bulletin paper \cite{Thurston88} (which had been circulating as a preprint much earlier). On pages 427--428, he provides a
bound for the algebraic degree of pseudo-Anosov stretch factors
$\lambda$. He denotes the dimension of the Teichm\"uller space of $S$
by $d$ and writes:

\begin{quote}\it
  Therefore $\lambda$ is an algebraic integer of degree $\le d$. The
  examples of Theorem 7 show that this bound is sharp.
\end{quote}

Theorem 7, which describes a construction of pseudo-Anosov mapping
classes using Dehn twists, definitely should produce examples
realizing the degree $d$, but Thurston did not explain this in the
paper, nor did anyone else since then. The intuition that supports
Thurston's claim is that a random degree $d$ polynomial is likely to
be irreducible. However, it is not clear if the polynomials arising
from Thurston's constuction are random in any sense. Another
difficulty is the lack of irreducibility criteria that apply for
defining polynomials of stretch factors. For example, the well-known
Eisenstein's criterion does not apply since it requires the constant term
of the polynomial to be divisible by a prime, whereas the polynomials
in question always have constant term $\pm 1$.

Even the fact that the degree can grow linearly with the genus is nontrivial.
This is due to Arnoux and Yoccoz \cite{ArnouxYoccoz81} who constructed a degree
$g$ stretch factor on each closed orientable surface $S_g$ of genus $g\ge 3$.
More recently Shin \cite{Shin16} also realized the degree $2g$ on $S_g$. (For
$S_g$, the dimension of Teichm\"uller space and hence the maximal degree
predicted by Thurston is $6g-6$.)

\paragraph{The main result.}
In this paper, not only do we realize the theoretical maximum $6g-6$,
but we completely answer the question of which degrees appear on
$S_g$. Moreover, we also answer this question for most finite type
surfaces, including all nonorientable surfaces, for which Thurston's
construction does not apply.

Let $D(S)$ be the set of possible algebraic degrees of stretch factors
of pseudo-Anosov elements of $\Mod(S)$. Let $D^+(S) \subset D(S)$ be
the set of degrees arising from pseudo-Anosov maps with a transversely
orientable invariant foliation.  Finally, denote by $[a,b]_\even$ and
$[a,b]_\odd$ the set of even and odd integers, respectively, in the
interval $[a,b]$.
\begin{theorem}\label{theorem:degrees_simple}
  Let $g\ge 2$. We have
  \begin{displaymath}
    D(S_g) = \big[2,6g-6\big]_\even \cup \big[3,3g-3\big]_\odd
  \end{displaymath}
  and
  \begin{displaymath}
    D^+(S_g) = \big[2,2g\big]_\even \cup \big[3,g\big]_\even.
  \end{displaymath}
\end{theorem}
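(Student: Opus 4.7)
The plan is to split the theorem into complementary upper-bound and realizability halves, and to handle $D(S_g)$ and $D^+(S_g)$ in parallel, since the same mechanisms underlie both statements. Each degree in the stated sets must be shown to be achievable by an explicit pseudo-Anosov, while degrees outside must be ruled out by algebraic constraints on the stretch factor.

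For the upper bounds, I would first recover Thurston's bound $\deg \lambda \le 6g-6$ along the lines sketched in the introduction: $\lambda$ is the Perron eigenvalue of the linear action of $f$ on the transverse measures of $\calF^u$, and this space sits inside the tangent space of $\Teich(S_g)$, of dimension $6g-6$. When the foliations are transversely orientable, $\lambda$ upgrades to an eigenvalue of $f_*$ on $H_1(S_g;\RR) \cong \RR^{2g}$, giving $\deg \lambda \le 2g$; the symplectic intersection form forces the minimal polynomial of $\lambda$ to be reciprocal, and since an irreducible reciprocal polynomial of degree $>1$ has even degree, only even degrees appear in $D^+(S_g)$. The sharper odd-degree bound $3g-3 = (6g-6)/2$ in $D(S_g)$ would come from passing to the orientation double cover $\widetilde{S} \to S_g$ associated to $\calF^u$: the deck involution splits cohomology into $\pm 1$ eigenspaces, and the $\lambda$-orbit lies in the anti-invariant half, yielding a factor-of-two reduction relative to the ambient bound.

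For realizability, I would use Thurston's construction with pairs of filling multicurves $A = a_1 \cup \cdots \cup a_k$ and $B = b_1 \cup \cdots \cup b_\ell$ on $S_g$: any product $f = T_{a_1}^{n_1}\cdots T_{a_k}^{n_k} T_{b_1}^{-m_1}\cdots T_{b_\ell}^{-m_\ell}$ with positive exponents is pseudo-Anosov, and $\lambda$ is the Perron root of a nonnegative integer matrix $M(\vec n)$ encoding the intersection pattern weighted by $\vec n = (n_1,\ldots,n_k,m_1,\ldots,m_\ell)$. Choosing the filling system to have a prescribed number of components allows one to prescribe the size of $M(\vec n)$, and hence a candidate algebraic degree; variants of the construction (half-translation surfaces, Penner-style ribbon graphs, or equivariant lifts to double covers) account for the gap between the even range $[2,6g-6]_\even$ and the odd range $[3,3g-3]_\odd$, and similarly for the two components of $D^+(S_g)$.

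The crux, and the main obstacle, is proving that the characteristic polynomial $p_{\vec n}(x) = \det(xI - M(\vec n))$ is irreducible of the target degree for some choice of $\vec n$. Standard irreducibility tools such as Eisenstein, reduction mod $p$, or fixed Newton polygons do not apply uniformly across the families produced by the construction, so I would invoke the paper's asymptotic irreducibility criterion: as $\vec n \to \infty$ along a carefully chosen ray, the roots of $p_{\vec n}$ stratify into archimedean size classes whose combinatorics can only be realized by an irreducible factor of the full degree, yielding irreducibility for all sufficiently large specializations. Combined with the upper bounds, this pins down $D(S_g)$ and $D^+(S_g)$ exactly.
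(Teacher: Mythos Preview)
Your proposal captures the paper's overall architecture---upper bounds from Thurston/Long plus realizability via Penner-type constructions combined with an asymptotic irreducibility argument---and the even-degree realizability sketch is essentially right. But there are two genuine gaps.

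First, your claim that only even degrees appear in $D^+(S_g)$ is wrong (the printed statement has a typo: the second interval should read $[3,g]_\odd$). The symplectic form forces the \emph{characteristic} polynomial of $f_*$ on $H_1(S_g;\RR)$ to be reciprocal, not the minimal polynomial of $\lambda$ over $\QQ$. When $\lambda$ and $\lambda^{-1}$ are not Galois conjugate, the minimal polynomial $p(x)$ can have odd degree $d$, with its reciprocal $x^d p(1/x)$ appearing as a separate irreducible factor of $\chi_{f_*}$. The correct odd-degree upper bound here is $d \le g$, by the same halving argument you sketch for $D(S_g)$.

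Second, and more seriously, you have not identified the mechanism for realizing odd degrees. On an orientable surface, a Penner collection is necessarily a union of two multicurves $A \cup B$, so $i(C,C) = \begin{pmatrix} 0 & X \\ X^T & 0 \end{pmatrix}$ has \emph{even} rank; since the paper shows that the algebraic degree equals $\rank(i(C,C))$ (not the number of curves, as your ``size of $M(\vec n)$'' suggests), the construction on $S_g$ itself only yields even degrees. The paper obtains odd degrees by running Penner's construction on a \emph{nonorientable} quotient $N$ of $S_g$---where $C$ need not split into two multicurves and $\rank(i(C,C))$ can be any integer in $[3,\dim\Teich(N)]$---and then lifting to the double cover $S_g \to N$, which preserves the stretch factor. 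This step is essential and is not captured by the vague list of variants you offer.
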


In fact, we prove a more general result in
\Cref{theorem:degrees-general}, where we also determine $D^+(S)$ for
any finite type surface, and $D(S)$ for
\begin{itemize}
\item nonorientable surfaces with any number of punctures and
\item orientable surfaces with an even number of punctures.
\end{itemize}
We also almost completely determine $D(S)$ for orientable surfaces with an odd
number of punctures. The reason we miss some cases is that these surfaces
are not double covers of nonorientable surfaces. However, our construction
relies on such a covering to realize the highest possible odd degree.

The fact that $D(S_g)$ and $D^+(S_g)$ cannot be larger than stated in
\Cref{theorem:degrees_simple} is well-known. We have
$\min D(S_g) \ge 2$, because only $1$ and $-1$ are algebraic units of
degree 1. Thurston \cite{Thurston88} showed that
$\max D(S_g) \le 6g-6$ and $\max D^+(S_g) \le 2g$, and Long
\cite{Long85} showed that if $d \in D(S)$ is odd, then $d \le 3g-3$. A
similar argument shows that if $d \in D^+(S)$ is odd, then $d \le g$.
(Here and in what follows, we use $d$ to denote any degree, not
necessarily the maximal one.)

\paragraph{Irreducibility of polynomials via converging roots.}

In the work of Arnoux--Yoccoz \cite{ArnouxYoccoz81} and Shin
\cite{Shin16}, a construction of pseudo-Anosov maps is given such that
$\lambda$ is a root of a polynomial which can be shown to be
irreducible\footnote{Irreducibility in this paper is
always meant over $\ZZ$.}. The irreducibility criteria in both cases are specific to
one particular sequence of polynomials and cannot easily be
generalized to realize other degrees.

In this paper we use a novel irreducibility criterion. Since the statement is
elementary and the proof is short, we include the criterion here together with
the proof.

\begin{lemma}\label{lemma:galois-conjugates}
  Let $u_k(x) \in \ZZ[x]$ be a sequence of integral polynomials and let
  $\lambda_k \in \CC$ be a sequence of roots, that is, we assume
  $u_k(\lambda_k)=0$ for all $k$. Suppose that there exists $v(x) \in \CC[x]$
  such that
  \begin{equation}\label{eq:poly-convergence}
    \lim_{k\to \infty}\frac{u_k(x)}{x-\lambda_k} = v(x).
  \end{equation}
  Suppose $v(\theta) = 0$ for some $\theta \in \CC$, and let
  $\theta_k \to \theta$ be a sequence with $u_k(\theta_k) = 0$ for all $k$.
  If $\theta_k \ne \theta$ for all but finitely many $k$, then $\theta_k$ and
  $\lambda_k$ are roots of the same irreducible factor of $u_k(x)$ for all but
  finitely many $k$.
\end{lemma}
\begin{proof}
  Factor the polynomials $u_k(x)$ to irreducible factors over $\ZZ$. Assume for
  a contradiction that $\theta_k$ and $\lambda_k$ are in different irreducible
  factors for infinitely many $k$. By restricting to a subsequence, we may
  assume that $\theta_k$ and $\lambda_k$ are in different irreducible factors
  for all $k$. For all $k$, let $w_k(x) \in \ZZ[x]$ be an irreducible factor
  with root $\theta_k$. Since there is a uniform bound on the absolute values
  of the roots of $w_k(x)$, we can restrict further to a subsequence such that
  all $w_k(x)$ have the same degree and $w_k(x) \to w(x)$ for some $w(x)$.
  Since $w_k(x) \in \ZZ[x]$, we have $w(x) \in \ZZ[x]$ and $w_k(x) = w(x)$ if
  $k$ is large enough. In particular, $\theta_k = \theta$ if $k$ is large
  enough, which is a contradiction.
\end{proof}

The criteria are cleanest to state and prove for sequences of
polynomials, but quantitative versions would also be possible to
obtain using Newton's formulas for coefficients of polynomials in
terms of the roots.

\paragraph{Finding stretch factors with prescribed degrees.}
To construct stretch factors of prescribed algebraic degrees, we use
Penner's construction of pseudo-Anosov maps. In this construction one
has two choices: the choice of a collection of curves and the choice of
a product of Dehn twists about these curves. Interestingly, the
algebraic degree of a stretch factor arising from this construction
seems to depend primarily only on the choice of the collection of curves,
and not the Dehn twist product. More specifically, computer
experiments have led to the following observation.

\begin{quote}\it
  The algebraic degree of a stretch factor arising from Penner's
  construction typically equals the rank of the intersection matrix of
  the collection of curves.
\end{quote}

However, the choice of the Dehn twist product also has some effect on
the algebraic degree of the stretch factor. Unfortunate choices of
Dehn twist products may result in a lower algebraic degree than the
typical one.

On the other hand, we will show that for certain infinite sequences of
Dehn twist products the above observation is guaranteed to hold
asymptotically. This way we obtain a simple criterion
(\Cref{theorem:curves-and-degree}) stating that if a collection of curves with
rank $d$ intersection matrix exists on a surface $S$, then
$d \in D(S)$. With this criterion in hand, the problem of realizing
algebraic degrees reduces to the problem of constructing collections
of curves on surfaces with certain properties. We construct
collections of curves in \Cref{sec:collections_of_curves}.

\paragraph{Constructing sequences of polynomials with converging roots.}

In order to prove \Cref{theorem:curves-and-degree}, we construct sequences
$f_1, f_2, \ldots$ of pseudo-Anosov mapping classes such that the defining
polynomials $u_k(x)$ of the stretch factors satisfy the hypotheses of
\Cref{lemma:galois-conjugates} and relate the rank of the intersection matrix
to the number of disjoint sequences $\theta_k \to \theta$ where
$\theta_k \ne \theta$ for all but finitely many $k$.

The sequences $f_k$ are constructed as follows. Fixing a collection of curves
in Penner's construction, we start with some product of Dehn twists and we
modify this product for each $k$ by replacing the Dehn twists with their $k$th
power. It turns out that the defining polynomials of such sequences have the
asymptotic behavior as in \Cref{lemma:galois-conjugates}. We prove this in
three parts, presented in
\Cref{sec:convergence,sec:homotopy-invariance,sec:rank-criterion}.

The fact that the defining polynomials converge in the sense of
\Cref{eq:poly-convergence} is shown in \Cref{theorem:convergence}.
This has two main parts: showing that the left Perron--Frobenius
eigenvectors of certain matrices associated to the Dehn twist products
converge (\Cref{sec:left_PF_eigenvector_estimate}) and showing that
the left actions of the matrices on the orthogonal complements of these
eigenvectors also converge (\Cref{sec:convergence_of_linear_maps}).

In \Cref{sec:homotopy-invariance}, we show that the limit of the left actions
in the previous paragraph turns out to be a composition of projections from
hyperplanes in $\RR^n$ to other hyperplanes in $\RR^n$. Moreover, for
appropriate choices of Dehn twist products, cancellations occur in this
composition of projections, so the limit of the left actions \emph{is} a
projection and therefore we have $v(x) = x(x-1)^s$ for the limit polynomial in
\Cref{lemma:galois-conjugates}. Therefore we will use
\Cref{lemma:galois-conjugates} with $\theta = 0$ or $1$.

Finally, \Cref{prop:multiplicity-of-one} relates the rank of the
intersection matrix to the number of roots of $u_k(x)$ that are
different from 1. The roots are also always different from 0. This
gives a count for sequences $\theta_k \to \theta$ where
$\theta_k \ne \theta$ for all $k$.

\paragraph{Realizing degrees algorithmically.}
Our approach also provides an algorithm for finding a pseudo-Anosov mapping
class on a given surface whose stretch factor has a prescribed algebraic
degree. Indeed, the pseudo-Anosov mapping classes $f_k$ described above have
stretch factors that eventually have the prescribed degree. Therefore one can
iterate over $f_1,f_2,\ldots$ to find a desired example in finite time.

The Dehn twist products for which the degree of the stretch factor is not the
rank of the intersection matrix seem to be rare, so in practice $f_1$ is very
often already a good example. However, it would be interesting to prove a bound
on the smallest $k$ such that $f_k$ is guaranteed to have a stretch factor with
the prescribed degree. Such a bound seems attainable by effectivizing
\Cref{lemma:galois-conjugates} and estimating the rate of convergence in
\Cref{theorem:convergence}. Not only would such a bound give an estimate on the
running time of the algorithm, but it would also allow one to give formulas for
mapping classes whose stretch factors have prescribed algebraic degrees.

\paragraph{Degrees of trace fields of Veech groups.}

Our second result concerns trace fields of Veech groups. A
half-translation surface is a surface with a singular Euclidean
structure with trivial or $\ZZ_2$-holonomy. Its Veech group is the
group of its $\PSL(2,\RR)$-symmetries. Every Veech group is a Fuchsian
group, and its trace field is a natural invariant of the
half-translation surface. There is a half-translation surface
associated with every pseudo-Anosov map $f$ which is defined by the
stable and unstable foliations of $f$. The trace field of the Veech
group of this surface is $\QQ(\lambda + \lambda^{-1})$, where
$\lambda$ is the stretch factor of $f$. The degree of the field
extension $\QQ(\lambda + \lambda^{-1}) : \QQ$ is either the algebraic
degree of $\lambda$ or half of it. For more details, see \cite[\S
11.3]{FarbMargalit12}, \cite{Zorich06}, \cite[\S 7]{KenyonSmillie00},
\cite[\S 7]{GutkinJudge00} or \cite[\S 9]{McMullen03}.

Which Fuchsian groups arise as Veech groups is an open question
\cite[Problem 5]{HubertMasurSchmidtZorich06}. Whether there is a
cyclic Veech group generated by a hyperbolic element is also unknown
\cite[Problem 6]{HubertMasurSchmidtZorich06}. Also little is known
about the number fields that arise as trace fields of Veech groups. As
a corollary of our results on the algebraic degrees of stretch
factors, we obtain the following.

\begin{theorem}\label{theorem:trace_fields}
  The set of degrees of number fields that arise as trace fields of
  Veech groups of half-translation surfaces homeomorphic to $S_g$ is
  $\{1,\ldots,3g-3\}$.
\end{theorem}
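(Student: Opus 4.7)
My plan is to derive \Cref{theorem:trace_fields} from \Cref{theorem:degrees_simple} via the dichotomy recalled just before its statement: if $\lambda$ is the stretch factor of a pseudo-Anosov on $S_g$ with $d := \deg\lambda$ and minimal polynomial $p$, then $t := [\QQ(\lambda + \lambda^{-1}):\QQ]$ equals $d/2$ when $p$ is reciprocal (equivalently, when $\lambda^{-1}$ is a Galois conjugate of $\lambda$) and equals $d$ otherwise. Each of the two subcases is elementary: if $p$ is reciprocal then $\lambda$ and $\lambda^{-1}$ are swapped by a Galois element, whose fixed field is generated by $\lambda + \lambda^{-1}$; if $p$ is not reciprocal then $\lambda^{-1}\in \QQ(\lambda)$ but no nontrivial Galois element fixes $\lambda + \lambda^{-1}$, so $\QQ(\lambda + \lambda^{-1}) = \QQ(\lambda)$.

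\textbf{Upper bound.} I case-split on reciprocity of $p$. If $p$ is reciprocal, then $t = d/2 \le (6g-6)/2 = 3g-3$ using $d \le 6g-6$ from \Cref{theorem:degrees_simple}. If $p$ is non-reciprocal, then $\lambda$ and $\lambda^{-1}$ have disjoint Galois orbits, and Long's argument---cited in the introduction for odd $d$ but whose proof works verbatim whenever $p$ is non-reciprocal---shows that both orbits embed in the spectrum of $\psi_*$ on the $(6g-6)$-dimensional space of transverse measures to $\calF^u$. Thus $2d \le 6g-6$, so $t = d \le 3g-3$.

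\textbf{Lower bound.} I realize each $t \in \{1,\ldots,3g-3\}$ explicitly. For odd $t \in [3,3g-3]$, \Cref{theorem:degrees_simple} supplies a pseudo-Anosov with $\deg\lambda = t$; since any irreducible reciprocal polynomial in $\ZZ[x]$ of odd degree must be $x \pm 1$ (impossible for $\lambda > 1$), the polynomial $p$ is automatically non-reciprocal, so $t = d$. For $t = 1$, I use any pseudo-Anosov on $S_g$ whose stretch factor is a quadratic algebraic integer with reciprocal minimal polynomial; such examples are classical for every $g \ge 2$, for instance via branched covers of linear Anosov diffeomorphisms of the torus. For even $t \in [2, 3g-3]$, I realize $t$ by exhibiting a pseudo-Anosov with $\deg \lambda = 2t$ and reciprocal $p$: when $2t > 3g-3$, reciprocity is forced by the dimension argument of the upper bound, so any pseudo-Anosov of degree $2t$ from \Cref{theorem:degrees_simple} suffices; when $2t \le 3g-3$, I apply \Cref{theorem:recipe} to a $\ZZ_2$-symmetric curve configuration so that equivariance under the involution forces the stretch-factor polynomial to be reciprocal.

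\textbf{Main obstacle.} The most delicate step is the even case $2t \le 3g-3$, where reciprocity of $p$ is not automatic. It requires exploiting the flexibility of Penner's construction: one selects a symmetric collection of curves whose intersection matrix has rank $2t$, together with a Dehn twist product from the recipe of \Cref{theorem:recipe} that is equivariant under a surface involution, so that the symmetry forces $\lambda^{-1}$ to be a Galois conjugate of $\lambda$ in the resulting stretch-factor polynomial. Verifying that \Cref{theorem:recipe} can be arranged to respect such a symmetry is where the asymptotic irreducibility machinery from the body of the paper is genuinely needed.
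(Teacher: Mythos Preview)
Your upper bound argument matches the paper's. For the lower bound, however, the paper takes a simpler and more uniform route that sidesteps your case split entirely.

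The paper realizes every $t \in \{1,\ldots,3g-3\}$ by taking a pseudo-Anosov on $S_g$ with $\deg\lambda = 2t$ (even), as produced by \Cref{theorem:recipe}. Crucially, these even-degree examples are built via Penner's construction directly on the orientable surface $S_g$, so the intersection graph $\bG(\Omega)$ is automatically bipartite (the curves split as $C = A \cup B$). The paper's \Cref{corollary:Galois-conjugates}, which rests on the symplectic form of \Cref{corollary:charpoly_reciprocal}, then shows that $\lambda^{-1}$ is a Galois conjugate of $\lambda$ for \emph{all} such examples. Hence $[\QQ(\lambda+\lambda^{-1}):\QQ] = \deg(\lambda)/2 = t$ in every case, with no further work.

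So what you flag as the ``main obstacle''---engineering reciprocity when $2t \le 3g-3$---is not an obstacle at all: reciprocity comes for free from the bipartite structure inherent to Penner's construction on orientable surfaces. Your proposed mechanism (a surface involution and an equivariant twist product) is not the one the paper uses, is left vague, and would require separate justification; the actual source of reciprocity is the invariant symplectic form on $\hV$, not a geometric involution of $S_g$. Your detour through odd-degree stretch factors for odd $t$ is correct but superfluous once the even-degree argument already covers all of $\{1,\ldots,3g-3\}$.
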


\paragraph{Pseudo-Anosov mapping classes that are not lifts.}
David Futer and Samuel Taylor has pointed out to us the following corollary of
\Cref{theorem:degrees_simple}.

\begin{corollary}\label{cor:not-lift}
  For any $g \ge 2$, there exists a pseudo-Anosov element of $\Mod(S_g)$ that has
  no power that arises by lifting a pseudo-Anosov mapping class on a lower
  genus surface by a branched covering.
\end{corollary}
\begin{proof}
  Any pseudo-Anosov mapping class whose stretch factor has degree $6g-6$ has
  the required property. This is because the degree is preserved under taking
  powers (\Cref{lemma:degree-under-powers}) and it is clearly also preserved
  under lifts. However, $6g-6$ is not a possible degree on lower genus surfaces.
\end{proof}

Bestvina and Fujiwara have also described a property of pseudo-Anosov maps such
that if this property is satisfied, then no power of the map is a lift by a
branched covering \cite[Lemma 6.2]{BestvinaFujiwara17}. In Example 6.4 of their
paper, they build an explicit example in genus 3 satisfying with the above
property. Earlier, Bestvina and Fujiwara also proved a result analogous to
\Cref{cor:not-lift} for unbranched coverings instead of branched coverings
\cite[Proposition 4.2]{BestvinaFujiwara07}.

We remark that \Cref{cor:not-lift} can easily be generalized to punctured and
nonorientable surfaces also using the more general \Cref{theorem:degrees-general}
instead of \Cref{theorem:degrees_simple}.

\paragraph{Open questions.}
Our construction uses Penner's construction, not Thurston's construction,
therefore Thurston's remark that the maximal degree arises from his
construction is yet to be justified. We note that it is possible that his remark
applies only to orientable surfaces, because we are not aware of a natural
adaptation of his construction to nonorientable surfaces.

The algebraic degree of $\lambda$ is an interesting measure of
complexity of $f$. Franks and Rykken \cite{FranksRykken99} (see also
\cite[Theorem 5.5]{GutkinJudge00}) showed that if $S$ is orientable
and $\calF^u$ and $\calF^s$ are transversely orientable, then $f$ is a
lift of an Anosov mapping class of the torus by a branched covering if
and only if the degree of $\lambda$ is 2. Farb conjectured that this
phenomenon generalizes to higher degrees.

\begin{conjecture}[Farb]\label{conj:farb}
  Given any $d$ there exists $h(d)$ so that any pseudo-Anosov map with
  degree $d$ stretch factor on a closed orientable surface arises by
  lifting a pseudo-Anosov map on some surface of genus at most $h(d)$
  by a (branched or unbranched) cover.
\end{conjecture}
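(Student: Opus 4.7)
Since this statement is an open conjecture rather than a theorem proved in the paper, my proposal outlines a structural attack and flags where the real obstruction sits. The plan is to translate the hypothesis ``$\lambda$ has degree $d$'' into a dimensional constraint on a cohomological decomposition of the flat surface invariant under $f$, and then try to realize any excess dimension as equivariant covering data, forcing $f$ to descend to a quotient of bounded genus.

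First I would replace $f\colon S \to S$ by its associated half-translation surface $(X,q)$, on which $f$ acts affinely. The trace field $K = \QQ(\lambda+\lambda^{-1})$ has degree at most $d$ over $\QQ$, and by Kenyon--Smillie \cite{KenyonSmillie00} and McMullen \cite{McMullen03} the $K$-span of the cohomology class of $\mathrm{Re}(q)$ lives in a distinguished $f$-invariant subspace $V_\lambda \subset H^1(X;\RR)$ of dimension bounded by a function of $d$. Let $V^\perp$ be an $f$-invariant complement under the intersection form. The base case $d=2$ of Franks--Rykken \cite{FranksRykken99} reveals the desired pattern: there $V^\perp$ is accounted for entirely by a branched cover of the torus to which $f$ descends, and the general conjecture asserts that the excess dimension $\dim H^1(X) - \dim V_\lambda$ must analogously force $f$ to be a lift from a surface of genus at most $h(d)$.

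The critical step is to produce a finite group $G \subset \Mod(S)$ centralizing some power of $f$ whose induced action on $H^1(S;\QQ)$ fixes $V_\lambda$ pointwise and admits no nonzero fixed vector on $V^\perp$. If such a $G$ exists with $|G|$ comparable to $\dim V^\perp$, then Riemann--Hurwitz applied to the branched cover $S \to S/G$ bounds the genus of $S/G$ in terms of $d$, and $f$ descends to a pseudo-Anosov on $S/G$ with the same stretch factor; one then induces on $g(S)$. The main obstacle, and the reason the conjecture remains open, is precisely this step: an algebraic direct-sum decomposition of $H^1$ does not automatically lift to a topological symmetry, because the image of $\Mod(S) \to \mathrm{Sp}(2g,\ZZ)$ is far from being the full symplectic group and its finite subgroups commuting with a given symplectic transformation need not be realized by mapping classes. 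Plausible routes past this obstruction are to work virtually (replacing $S$ by a finite unbranched cover before seeking $G$), or to exploit complex-multiplication rigidity of $\mathrm{Jac}(X)$ when $K$ is as large as possible relative to $g$, so that $\End(\mathrm{Jac}(X))$ itself forces the existence of the desired symmetry.
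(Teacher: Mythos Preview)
You have correctly identified that this statement is an open conjecture, not a theorem the paper proves. The paper presents it in the ``Open questions'' paragraph, attributes it to Farb, and offers no argument beyond noting that the Franks--Rykken theorem is the statement $h(2)=1$ in the orientable-foliation case. There is therefore no paper proof to compare against.

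Your outline is a sensible heuristic for why one might believe the conjecture: the degree constraint bounds the dimension of the ``dynamical'' piece $V_\lambda$ of cohomology, and one hopes the complementary piece is accounted for by covering symmetry. You are also honest about where the real content would have to be, namely in producing a finite group of mapping classes commuting with (a power of) $f$ whose action on cohomology realizes the splitting. That step is exactly the obstruction, and nothing in your sketch gets around it: the suggestions to pass to a virtual cover or to invoke CM rigidity of the Jacobian are plausible directions but are not arguments, and the latter would at best handle a thin set of cases where $K$ is as large as possible. So your proposal is an accurate status report rather than a proof, which is the appropriate response to a statement the paper itself leaves open.
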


However, this generalization turns out to be false. Leininger and Reid
\cite{LeiningerReid17} and independently Yazdi \cite{Yazdi17} have recently
announced that they have counterexamples to \Cref{conj:farb}.

There are many other open questions about the degrees of stretch
factors. Margalit asked what the possible algebraic degrees of stretch
factors in the Torelli group are. Computer experiments suggest that
the same degrees occur in the Torelli group as in the whole mapping
class group. We wonder if the methods of this paper can be used to
prove this. One can ask the same question for any other subgroup of
$\Mod(S)$. For the point-pushing subgroup, it would be interesting to
know if the degree of the stretch factor is related to some property
of the corresponding element of the fundamental group.

It is also not known what degrees are generic in $\Mod(S)$, its subgroups or strata of the holomorphic quadratic differentials over
the moduli space of $S$. We conjecture that the largest possible degree ($6g-6$ in the case of $S_g$) is generic in the whole mapping class group. In the non-maximal strata, the degree cannot reach $6g-6$, and the likely scenario is that the generic degree in each stratum is the maximal degree that can be realized in that stratum.

Finally, many of these questions have versions for outer automorphisms of free groups.

\paragraph{The structure of the paper.}

In \Cref{sec:penners-construction-background} we review some basic facts about
Penner's construction. In
\Cref{sec:convergence,sec:homotopy-invariance,sec:rank-criterion} we prove the
three parts contributing to the proof of \Cref{theorem:curves-and-degree} which
reduces the problem of realizing degrees to realizing ranks of intersection
matrices of collections of curves. We construct collections of curves in
\Cref{sec:collections_of_curves}. In two cases, we fail to construct a
collection of curves whose intersection matrix has the desired rank. In these
two cases, we give explicit examples of pseudo-Anosov mapping classes in
\Cref{sec:two_examples} without using Penner's construction. The proofs of the
main theorems are given in \Cref{sec:proofs}.

\paragraph{Acknowledgements.} This work is an improvement over a part
of the Ph.D.~thesis of the author. He thanks his advisor, Autumn Kent,
for her guidance and Dan Margalit for suggesting the problem and for
many valuable comments. He also appreciates helpful conversations with
Joan Birman, Jordan Ellenberg, Benson Farb, Eriko Hironaka and
Jean-Luc Thiffeault. Special thanks to the referees whose comments
have greatly helped improve the clarity of the exposition.

The author was partially supported by the grant NSF DMS-1128155.

\section{Background on Penner's construction}
\label{sec:penners-construction-background}

In this section we recall Penner's construction and some facts about the
construction that will be used later in the paper.

\subsection{Penner's construction}
\label{sec:penners-construction}

Consider the annulus $A = S^1 \times [0,1]$. We orient $A$ via its embedding to
the $(\theta,r)$-plane (the plane parametrized in polar coordinates) by the map
$(\theta,t) \mapsto (\theta,t+1)$. The orientation of $A$ is defined to be
consistent with the standard orientation of the plane. The standard Dehn twist
$T:A \to A$ is defined by the formula $T(\theta,t) = (\theta+2\pi t,t)$.

Let $c$ be a two-sided simple closed curve on a surface $S$, and let
$\phi: A \to S$ be a homeomorphism between $A$ and a regular
neighborhood of $c$. We refer to the pair $(c,\phi)$ as a \emph{marked
  curve}. The \emph{Dehn twist} about the marked curve $(c,\phi)$ is the
homeomorphism $T_{c,\phi}$ defined by the formula
\begin{displaymath}
  T_{c,\phi}(x) =
  \begin{cases}
    \phi \circ T \circ \phi^{-1}(x) & \mbox{if } x\in \phi(A) \\
    x & \mbox{if } x \in S-\phi(A). \\
  \end{cases}
\end{displaymath}
In the rest of the paper we will not distinguish between $T_{c,\phi}$
and its mapping class. Note that if $S$ is oriented, then $T_{c,\phi}$
is the left Dehn twist $T_c$ if $\phi$ is orientation-preserving and
the right Dehn twist $T_c^{-1}$ otherwise.

Let $(c,\phi_c)$ and $(d,\phi_d)$ be marked curves that intersect at a
point $p$. We say that they are \emph{marked inconsistently} at $p$ if
the pushforward of the orientation of $A$ by $\phi_c$ and $\phi_d$
disagree near $p$.

Two simple closed curves on a surface are in \emph{minimal position}
if they realize the minimal intersection number in their homotopy
classes. A collection of simple closed curves $C$ on a surface is
\emph{filling} if the curves are in pairwise minimal position and the
components of $S-C$ are disks or once-punctured disks.

Penner gave the following construction for pseudo-Anosov mapping
classes \cite{Penner88}. See also \cite{Fathi92} for a different proof.

\begin{pennerconst*}[General case]\label{theorem:penners_construction}
  Let $C = \{(c_1,\phi_1), \ldots, (c_n,\phi_n)\}$ be a filling collection
  of marked curves on $S$. Suppose that they are marked inconsistently at
  every intersection.   Then any product of the $T_{c_i,\phi_i}$ is pseudo-Anosov
  provided each twist is used at least once.
\end{pennerconst*}

When $S$ is orientable, the hypotheses imply that $C$ is a union
of two multicurves $A$ and $B$, and the statement takes the following
more well-known form.

\begin{pennerconst*}[Orientable case]
  Let $A = \{a_1, \ldots, a_n\}$ and $B = \{b_1, \ldots, b_m\}$ be a
  pair of filling multicurves on an orientable surface $S$. Then any
  product of $T_{a_j}$ and $T_{b_k}^{-1}$ is pseudo-Anosov provided
  that each twist is used at least once.
\end{pennerconst*}

We remark that if $C$ is a union of a pair of multicurves and the
curves are marked inconsistently at every intersection, then the
surface filled by $C$ is necessarily orientable. Hence if $S$ is
nonorientable, then $C$ in Penner's construction cannot be a union
of two multicurves.

\subsection{Oriented collections of marked curves}
\label{sec:oriented_collections}

Let $(c,\phi)$ be a marked curve. We define its \emph{left side} as
$\phi(S^1 \times \{0\})$ and its \emph{right side} as
$\phi(S^1 \times \{1\})$. Note that the marking $\phi$ also induces an
orientation of $c$ from the standard (counterclockwise) orientation of
$S^1$.

Suppose $(c,\phi_c)$ and $(d,\phi_d)$ are marked inconsistently at
some $p \in c\cap d$. When we follow $c$ in the direction of its
orientation near $p$, we either cross from the left side of $d$ to the
right side of $d$ or the other way around. In the first case, we call
$p$ a \emph{left-to-right} crossing. In the second case, we call it
\emph{right-to-left} crossing. Note that the definition is symmetric
in $c$ and $d$: if $c$ crosses from the left side of $d$ to the
right side of $d$, then $d$ also must cross from the left side of $c$
to the right side of $c$ (\Cref{fig:crossings}).

\begin{figure}[ht]
  \centering
  \begin{tikzpicture}[thick]
    \draw[->,red] (-2,0) -- (2,0);
    \node[above] at (1.5,0) {$R$};
    \node[below] at (1.5,0) {$L$};
    \draw[->,blue] (0,-2) -- (0,2);
    \node[left] at (0,1.5) {$L$};
    \node[right] at (0,1.5) {$R$};

    \begin{scope}[xshift = 5cm]
      \draw[->,red] (2,0) -- (-2,0);
      \node[above] at (1.5,0) {$L$};
      \node[below] at (1.5,0) {$R$};
      \draw[->,blue] (0,-2) -- (0,2);
      \node[left] at (0,1.5) {$L$};
      \node[right] at (0,1.5) {$R$};
    \end{scope}
  \end{tikzpicture}
  \caption{A left-to-right and a right-to-left crossing.}
  \label{fig:crossings}
\end{figure}
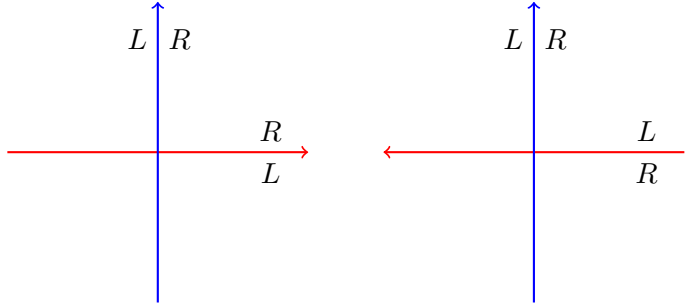

Let $C$ be a collection of marked curves which are inconsistently
marked at each intersection. Then $C$ is called \emph{completely
  left-to-right} if all crossings are left-to-right and
\emph{completely right-to-left} if all crossings are right-to-left.

We also recall that a singular foliation on a surface is \emph{transversely
  orientable} if there is a continuous choice of vectors at the non-singular
points of the foliation so that at each point the chosen vector is not tangent
to the leaf of the foliation going through that point.

\begin{proposition}\label{prop:train-track-orientable-foliation}
  If $C$ is completely left-to-right or completely right-to-left, then
  the pseudo-Anosov maps constructed from it by Penner's construction
  have a transversely orientable invariant foliation.
\end{proposition}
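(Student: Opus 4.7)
The plan is to pass to the canonical train track $\tau = \tau_C$ underlying Penner's construction, orient its branches using the marking data, and argue that the completely left-to-right (resp.~right-to-left) hypothesis makes the orientation coherent at every switch. Orientability of $\tau$ will then transfer to the carried invariant foliation.

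First, I recall that the pseudo-Anosov maps obtained from $C$ by Penner's construction have invariant foliations carried by a standard train track $\tau$ built from $C$ as follows: at each intersection point $p$ of two marked curves $(c_i,\theta_i)$ and $(c_j,\theta_j)$, smooth $p$ into a four-valent switch in the unique way dictated by the inconsistent marking at $p$. The branches of $\tau$ are the subarcs of the $c_i$ cut out by these smoothings, and the carrying of the invariant foliation by $\tau$ is the standard output of Penner's argument.

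Next, I orient each branch of $\tau$ using the orientation on the underlying curve induced by its marking (pullback of the standard orientation of $S^1$). The claim is that under the completely left-to-right (or completely right-to-left) hypothesis this gives a well-defined orientation on $\tau$ as a train track, i.e.\ at each four-valent switch the two smoothly-paired branches inherit coherent orientations (one enters, the other exits along the shared tangent line). To verify this, I would draw the local picture of a single left-to-right crossing as in \Cref{fig:crossings}, identify which pair of incoming/outgoing $c$- and $d$-half-branches are joined by the Penner smoothing, and check that in both of the two smooth arcs through the switch, an arrow along $c$ matches an arrow along $d$. The right-to-left case is identical up to reversing all arrows. The completeness hypothesis is what rules out the mixed situation, where at a single switch one smooth arc would be coherent but the other would carry opposing orientations.

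Finally, once $\tau$ is an oriented train track, every foliation carried by $\tau$ inherits a coherent orientation on its leaves by transporting the branch orientation along the carrying map; in particular the invariant measured foliations of the Penner pseudo-Anosov are orientable. The main obstacle in the plan is Step~3: unwinding Penner's smoothing convention at a four-valent switch and confirming that a left-to-right crossing pairs a $c$-incoming half-branch with a $d$-outgoing one (and symmetrically), which is a careful but local picture-chasing exercise rather than a global argument.
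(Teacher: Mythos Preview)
Your approach via tangential orientation of the Penner track is sound and is dual to, but genuinely different from, the paper's argument. The paper puts a \emph{transverse} orientation on the bigon track: in the completely left-to-right case, $\tau^-$ admits a continuous transverse vector field pointing toward the right side of each curve, so the stable foliation it carries is transversely orientable; the right-to-left case is handled symmetrically with $\tau^+$. Your route instead orients the branches by the curves' own orientations, which makes $\tau^+$ an oriented train track in the left-to-right case (and $\tau^-$ in the right-to-left case), so the unstable foliation is tangentially orientable. The two conclusions match because an orientation on the unstable foliation is exactly a transverse orientation on the stable one.

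One point that needs tightening: the smoothing at an intersection is \emph{not} unique as you state---Penner produces two bigon tracks $\tau^+$ and $\tau^-$---and your coherence check passes on only one of them. At a left-to-right crossing the curve orientations make the $\tau^+$ switch coherent (both half-branches on one side incoming, both on the other outgoing) but the $\tau^-$ switch incoherent; the situation reverses at a right-to-left crossing. So your ``reversing all arrows'' for the second case is not quite right: you must either commit to the correct track in each case, or observe that reversing every marking swaps left-to-right with right-to-left and simultaneously swaps $\tau^\pm$. With that choice made explicit, the local verification in your Step~3 is exactly the right computation and the argument goes through.
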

\begin{proof}
  Penner \cite[p.~188]{Penner88} observed that by smoothing out the intersections of $C$, one
  obtains a bigon track $\tau^+$ invariant under the Dehn twists of
  $C$ and hence under any pseudo-Anosov map $\psi$ constructed from
  his construction. By choosing the smoothings differently at every
  intersection, we get a track $\tau^-$ invariant under $\psi^{-1}$.
  The unstable foliation is carried by $\tau^+$, and the stable
  foliation is carried by $\tau^-$.

\begin{figure}[ht]
  \centering
  \begin{tikzpicture}[thick]
    \draw[red] (-2,0) -- (-1,0) .. controls (-0.25,0) and (0,0.5) ..
    (0,0);
    \draw[red,->] (0,0) .. controls (0,-0.5) and (0.25,0) .. (1,0) -- (2,0);
    \node[above] at (1.5,0) {$R$};
    \node[below] at (1.5,0) {$L$};
    \draw[->,blue] (0,-2) -- (0,2);
    \node[left] at (0,1.5) {$L$};
    \node[right] at (0,1.5) {$R$};
    \draw[very thick,->] (1,0) -- +(0,0.5);
    \draw[very thick,->] (-1,0) -- +(0,0.5);
    \draw[very thick,->] (0,1) -- +(0.5,0);

    \begin{scope}[xshift = 5cm]
      \draw[red] (-2,0) -- (-1,0) .. controls (-0.25,0) and (0,-0.5) ..
      (0,0);
      \draw[red] (0,0) .. controls (0,0.5) and (0.25,0) .. (1,0) -- (2,0);
      \draw[->,red] (-1,0) -- (-2,0);
      \node[above] at (1.5,0) {$L$};
      \node[below] at (1.5,0) {$R$};
      \draw[->,blue] (0,-2) -- (0,2);
      \node[left] at (0,1.5) {$L$};
      \node[right] at (0,1.5) {$R$};
    \draw[very thick,->] (1,0) -- +(0,-0.5);
    \draw[very thick,->] (-1,0) -- +(0,-0.5);
    \draw[very thick,->] (0,1) -- +(0.5,0);
    \end{scope}
  \end{tikzpicture}
  \caption{Transverse orientation in a neighborhood of a crossing.
    A left-to-right crossing and $\tau^-$ on the left. A right-to-left
    crossing and $\tau^+$ on the right.}
\label{fig:smoothing_out}
\end{figure}
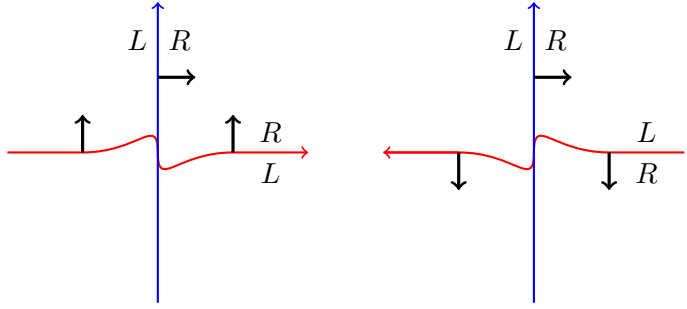

  When $C$ is completely left-to-right, $\tau^-$ is transversely
  orientable: there is a continuously varying set of vectors
  transverse to $\tau^-$ such that the vectors point toward the right
  side of the curves of $C$ (\Cref{fig:smoothing_out}). Hence the
  stable foliation is transversely orientable.

  Similarly, when $C$ is completely right-to-left, $\tau^+$ is
  transversely orientable, and the unstable foliation is transversely
  orientable.
\end{proof}

\subsection{Description Penner's construction by linear algebra}

Denote by $i(a,b)$ the geometric intersection number of the simple
closed curves $a$ and $b$. For two collections of curves $A = \{a_j\}$
and $B = \{b_k\}$, the intersection matrix $i(A,B)$ is a matrix whose
$(j,k)$-entry is $i(a_j,b_k)$.

Suppose we apply Penner's construction with the collection
$C = \{(c_1,\phi_1),\ldots,(c_n,\phi_n)\}$. Let
\begin{displaymath}
\Omega = i(C,C)
\end{displaymath}
and
\begin{equation}\label{eq:Q_i}
  Q_i = I + D_i\Omega \quad (1\le i \le n),
\end{equation}
where $I$ denotes the $n \times n$ identity matrix, and $D_i$ denotes
the $n \times n$ matrix whose $i$th entry on the diagonal is 1 and
whose other entries are zero. 
When a product of the twists $T_{c_i,\phi_i}$ satisfies the hypotheses
of Penner's construction, the corresponding product of the $Q_i$ is
\emph{Perron--Frobenius}, that is, it has nonnegative entries and some
power of it has strictly positive entries \cite[Proposition
5.3]{Fathi92}. Moreover, the stretch factor of the resulting
pseudo-Anosov mapping class equals the Perron--Frobenius eigenvalue
(the unique largest real eigenvalue) of the corresponding
Perron--Frobenius matrix \cite[Th\'eor\`eme 5.4]{Fathi92}. Our
matrices $Q_i$ are the transposes of $M(T_j)$ and $M(S_j^{-1})$ in
Section 5 of \cite{Fathi92}.

\section{Sequences of pseudo-Anosov mapping classes}
\label{sec:convergence}

In this section we study the asymptotic behavior of certain sequences of
pseudo-Anosov mapping classes arising from Penner's construction. We show that
the defining polynomials of the stretch factors converge in the sense of
\Cref{eq:poly-convergence}. Before we state the theorem more precisely, we need
to introduce some notation.

Let $\Omega$ be the intersection matrix of a filling collection of curves $C$. Let
$Z_i$ be the orthogonal complement of the $i$th row of $\Omega$. Since each
curve in $C$ intersects some other curve in $C$, all rows of $\Omega$ are
nonzero, and the $Z_i$ are hyperplanes. Let
\begin{equation}\label{eq:p-i-gets-j}
  p_{i \gets j} : \RR^n \to Z_i
\end{equation}
be the (not necessarily orthogonal) projection onto the hyperplane
$Z_i$ in the direction of $\e_j$, the $j$th standard basis vector in
$\RR^n$. This projection is defined if and only if $\e_j$ is not
contained in $Z_i$, which is in turn equivalent to the statement that
the $(i,j)$-entry of $\Omega$ is positive.

Let $\bG(\Omega)$ be the graph on the vertex set $\{1, \ldots, n\}$
where $i$ and $j$ are connected if the $(i,j)$-entry of $\Omega$ is
positive. For a closed path
\begin{displaymath}
  \gamma = (i_1\cdots i_Ki_1)
\end{displaymath}
in
$\bG(\Omega)$, define the linear map $f_\gamma: Z_{i_1} \to Z_{i_1}$
by the formula
\begin{equation}\label{eq:f-gamma}
  f_\gamma = (p_{i_1\gets i_{K}}\circ \cdots
  \circ p_{i_2\gets i_1})|_{Z_{i_1}}.
\end{equation}
In words, $f_\gamma$ is a composition of projections: first from
$Z_{i_1}$ to $Z_{i_2}$, then from $Z_{i_2}$ to $Z_{i_3}$, and so on, and finally
from $Z_{i_K}$ back to $Z_{i_1}$.

\begin{theorem}\label{theorem:convergence}
  Let $\Omega$ be the intersection matrix of a collection of curves
  satisfying the hypotheses of Penner's construction. Let
  $\gamma = (i_1\ldots i_Ki_1)$ be a closed path in $\bG(\Omega)$
  visiting each vertex at least once and let
  \begin{displaymath}
    M_{\gamma,k} = Q_{i_K}^{k} \cdots Q_{i_1}^{k}.
  \end{displaymath}
  Let $\lambda_k$ be the Perron--Frobenius eigenvalue of
  $M_{\gamma,k}$ and denote by $u_k(x)$ and $v(x)$ the characteristic
  polynomials $\chi(M_{\gamma,k})$ and $\chi(f_\gamma)$, respectively.
  Then we have
  \begin{displaymath}
    \lim_{k\to \infty}\frac{u_k(x)}{x-\lambda_k} = v(x).
  \end{displaymath}
\end{theorem}

The rest of the section is devoted to the proof of this statement.

\subsection{Estimating the left Perron--Frobenius eigenvectors}
\label{sec:left_PF_eigenvector_estimate}

In this section we study the asymptotic behavior of the left Perron--Frobenius
eigenvectors of the matrices $M_{\gamma,k}$ in \Cref{theorem:convergence}.
First we introduce a few conventions.

We follow the convention that vectors denoted by bold lowercase letters are
column vectors. Row vectors are written as transposes of column vectors. Note
that the $i$th row of a matrix $M$ can be written as $\e_i^T M$. We will write
$\omega_{ij}$ for the $(i,j)$-entry of $\Omega$.

We recall that the Perron--Frobenius eigenvectors of a Perron--Frobenius matrix
are the eigenvectors corresponding to the Perron--Frobenius eigenvalue whose
coordinates are positive. The Perron--Frobenius eigenvectors form a ray
emanating from the origin. When we say that the Perron--Frobenius eigenvector
can be chosen to have some property, we mean that there is an eigenvector on
this ray satisfying that property.

The result of this section is the following.

\begin{proposition}\label{prop:eigenvector_asymptotics}
  Let $\Omega$ be the intersection matrix of a collection of curves
  satisfying the hypotheses of Penner's construction. Let
  $\gamma = (i_1\ldots i_Ki_1)$ be a closed path in $\bG(\Omega)$
  visiting each vertex at least once and let
  \begin{displaymath}
    M_{\gamma,k} = Q_{i_K}^{k} \cdots Q_{i_1}^{k}.
  \end{displaymath}
  Then the left Perron--Frobenius eigenvector $\ww_k^T$ of
  $M_{\gamma,k}$ can be chosen for all $k$ so that
  \begin{equation}\label{eq:limit_of_difference}
    \lim_{k\to \infty} \left[\ww_k^T -
      \left(k\e_{i_1}^T\Omega +
        \omega_{i_2i_1}^{-1}\e_{i_2}^T\Omega\right)\right] = 0.
  \end{equation}
\end{proposition}
\begin{proof}
  Note that we can assume that $\gamma$ visits every vertex
  of $\bG(\Omega)$ at least twice. This is because the square of
  $M_{\gamma,k}$ is associated to a path visiting every vertex at
  least twice, has the same Perron--Frobenius eigenvectors as
  $M_{\gamma,k}$, and the quantities in
  \Cref{eq:limit_of_difference} are the same for $M_{\gamma,k}$
  and its square.

  Let $||\Omega||_\infty$ be the maximum of the entries of $\Omega$
  and let $||\Omega||_{\min,\gamma}$ be the minimum of those entries
  of $\Omega$ that are at a position $(i,j)$ such that $ij$ is an edge
  in $\gamma$. We will show that
  \begin{equation}\label{eq:row-estimate}
    ||\ww_{k}^T - k\e_{i_{1}}^T\Omega -
    \omega_{i_2i_1}^{-1}\e_{i_2}^T\Omega ||_\infty \le
    \frac{2^{K}||\Omega||_{\infty}^{K-1}}{k||\Omega||_{\min,\gamma}^{K}},
  \end{equation}
  which clearly implies the statement of the proposition. We break the
  proof of this inequality to two steps.

  \begin{step}\label{step:eigenvector-in-cone}
    The left Perron--Frobenius eigenvectors of $M_{\gamma,k}$ are contained in
    the cone generated by the rows of $\Omega M_{\gamma,k}$.
  \end{step}
  \begin{proof}
    First we recall a geometric proof of the fact that every Perron--Frobenius
    matrix has a (left) eigenvector in the positive cone $\RR^{n}_{\ge0}$. The
    key idea is that the right action of the matrix fixes the positive cone and
    it maps rays to rays. So we have a continuous action on the space of rays
    in the positive cone. This space is homeomorphic to a disk, so by the
    Brouwer fixed point theorem there is a fixed ray. This fixed ray corresponds
    to an eigenvector \cite{BorobiaTrias92}.

    We are going to run the same argument not for the positive cone but the
    smaller cone generated by the rows of $\Omega M_{\gamma,k}$. For this, we
    need to prove that this cone is invariant under the right action of
    $M_{\gamma,k}$. With that in hand, we obtain that $M_{\gamma,k}$ has an eigenvector
    in this smaller cone. It is well-known that the only eigenvectors in the
    positive cone are the Perron--Frobenius eigenvectors, so this implies the
    statement to be proven.

    Now we turn to showing that the cone $C_k$ generated by the rows of
    $\Omega M_{\gamma,k}$ is invariant under the right action of
    $M_{\gamma,k}$. Let
    \begin{displaymath}
      C = \{\vv^T \Omega: \vv \ge 0\}
    \end{displaymath}
    be the cone generated by the rows of $\Omega$. Note that
    $C_k = CM_{\gamma,k}$. It suffices to show that $C$ is invariant
    under the right action of $M_{\gamma,k}$ for all $k$, that is, that
    $CM_{\gamma,k} \subset C$. This is because multiplying both sides
    by $M_{\gamma,k}$ gives $CM^2_{\gamma,k} \subset CM_{\gamma,k}$,
    and substituting $C_k = CM_{\gamma,k}$ yields
    $C_kM_{\gamma,k} \subset C_k$.

    To show that $C$ is invariant under the right action of
    $M_{\gamma,k}$ for all $k$, it suffices to show that $C$ is
    invariant under the right action of the generators $Q_i$. For
    this, observe that
    \begin{displaymath}
      \Omega Q_i = \Omega (I+D_i\Omega) = (I + \Omega D_i) \Omega =
      Q_i^T \Omega
    \end{displaymath}
    for $1\le i \le n$. Finally note that for any $\vv \ge 0$ and
    $\vv^T\Omega \in C$, we have
    $(\vv^T\Omega)Q_i = (\vv^T Q_i^T)\Omega \in C$, since $\vv \ge 0$ implies
    $\vv^T Q_i^T\ge 0$.
  \end{proof}

  \begin{step}\label{step:row-estimate}
    For all $k$, every row of $\Omega M_{\gamma,k}$ can be normalized
    to a vector $\uu^T$ (that depends on $k$ and the row) that satisfies
    \begin{displaymath}
      ||\uu^T - k\e_{i_{1}}^T\Omega -
      \omega_{i_2i_1}^{-1}\e_{i_2}^T\Omega ||_\infty \le
      \frac{2^{K}||\Omega||_{\infty}^{K-1}}{k||\Omega||_{\min,\gamma}^{K}}.
    \end{displaymath}
  \end{step}

  \begin{proof}
    By expanding $M_{\gamma,k}$ and the $Q_i$ out using their definitions, we obtain
    \begin{align*}
      \Omega M_{\gamma,k} =\Omega Q_{i_{K}}^{k}
      \cdots Q_{i_{1}}^{k} &= \Omega (I +
                             kD_{i_{K}}\Omega) \cdots (I +
      kD_{i_1}\Omega) = \\
      &=\Omega +\sum_{1 \le t_1 < \ldots < t_\ell \le K}k^\ell \Omega D_{i_{t_\ell}}\Omega \cdots \Omega
      D_{i_{t_1}} \Omega  = \\
      &=\Omega + \sum_{i=1}^{n} \sum_{\stackrel{1 \le t_1 < \ldots <
          t_\ell \le K}{i_{t_\ell} = i}} k^\ell\Omega
      D_{i_{t_\ell}}\Omega \cdots \Omega D_{i_{t_1}} \Omega.
    \end{align*}
    So the $i'$th row of $\Omega M_{\gamma,k}$ is
    \begin{equation}\label{eq:i_primeth_row}
      \e_{i'}^T\Omega M_{\gamma,k}  =
      \e_{i'}^T\Omega + \sum_{i=1}^{n} H(i,i'),
    \end{equation}
    where
    \begin{equation}\label{eq:H_i_i}
      H(i,i') = \sum_{\stackrel{1 \le t_1 < \ldots < t_\ell \le K}{i_{t_\ell}
          = i}} k^\ell \e_{i'}^T\Omega
      D_{i_{t_\ell}}\Omega \cdots \Omega D_{i_{t_1}} \Omega.
    \end{equation}
    Right multiplication by $D_j$ zeroes out all columns except the
    $j$th column, therefore the following identity holds:
    \begin{displaymath}
      \e_i^T \Omega D_j = \omega_{ij}\e_j^T.
    \end{displaymath}
    Repetitively applying this identity for the terms in
    \Cref{eq:H_i_i}, we obtain
    \begin{equation}\label{eq:H_i_i_two}
      H(i,i') = \sum_{\stackrel{1 \le t_1 < \ldots < t_\ell \le K}{i_{t_\ell}
          = i}} k^\ell \omega_{i'i_{t_\ell}}\omega_{i_{t_\ell}i_{t_{\ell-1}}}\cdots
      \omega_{i_{t_2}i_{t_1}} \e_{i_{t_1}}^T\Omega.
    \end{equation}
    A summand on the right hand side is nonzero if and only if the
    path $(i_{t_1}\ldots i_{t_\ell} i')$ is contained $\bG(\Omega)$.
    In particular, if $ii' \notin \bG(\Omega)$, then all summands
    vanish and $H(i,i') = 0$.

    Now suppose that $ii' \in \bG(\Omega)$. Let $t$ be the largest element of
    $\{1,\ldots,K\}$ such that $i_t = i$. Since $\gamma$ visits $i$ at least
    twice, such $t$ exists and $t\ge 3$. Since $(i_1\ldots i_ti')$ and
    $(i_2\ldots i_ti')$ are paths in $\bG(\Omega)$, the right hand side of
    \Cref{eq:H_i_i_two} contains the nonzero summands
    \begin{displaymath}
      k^t\omega_{i'i_{t}}
      \cdots \omega_{i_{2}i_1} \e_{i_{1}}^T\Omega,
    \end{displaymath}
    \begin{displaymath}
      k^{t-1}\omega_{i'i_{t}}
      \cdots \omega_{i_{3}i_2} \e_{i_{2}}^T\Omega.
    \end{displaymath}
    Moreover, these two paths are the unique longest and second
    longest paths of the form $(i_{t_1}\ldots i_{t_\ell}i')$ in
    $\bG(\Omega)$ satisfying $i_{t_\ell} = i$. So we have
    \begin{equation}\label{eq:H_i_i_formula}
      H(i,i') = c(i,i')\left( \e_{i_{1}}^T\Omega +
        k^{-1}\omega_{i_2i_1}^{-1}\e_{i_2}^T\Omega + R(i,i') \right),
    \end{equation}
    where
    $c(i,i') =  k^{t}\omega_{i'i_{t}} \cdots
    \omega_{i_{2}i_1}$ and $R(i,i')$ is a sum of expressions of the
    form
    \begin{equation}\label{eq:fraction_term}
      \frac{ \prod k\omega_{ij}}{ \prod k\omega_{i'j'}} \e_{i''}^T\Omega
    \end{equation}
    such that the number of multiplicands in the denominator is at least two
    more than the number of multiplicands in the numerator. The trivial
    estimate yields that the supremum norm of each expression of the form
    \Cref{eq:fraction_term} is bounded from above by
    \begin{displaymath}
      \frac{||\Omega||_{\infty}^{K-2}}{k^2||\Omega||_{\min,\gamma}^{K}} ||\Omega||_{\infty}.
    \end{displaymath}

    A trivial upper bound on the number of terms in the sum $R(i,i')$
    is $2^{K}-1$, the number of nonempty subsets of
    $\{i_1,\ldots,i_{K}\}$, hence
    \begin{equation}\label{eq:R_i_i_estimate}
      ||R(i,i')||_\infty \le
      (2^{K}-1)\frac{||\Omega||_{\infty}^{K-1}}{k^2||\Omega||_{\min,\gamma}^{K}}.
    \end{equation}
    We remark that \Cref{eq:H_i_i_formula,eq:R_i_i_estimate} hold also
    when $ii'$ is a non-edge in $\bG(\Omega)$ provided $c(i,i')$ and
    $R(i,i')$ are defined to be zero.

    By substituting \Cref{eq:H_i_i_formula} into \Cref{eq:i_primeth_row} and
    rescaling each side we obtain the equation
    \begin{displaymath}
      k\frac{\e_{i'}^T \Omega M_{\gamma,k}}{\sum_{i=1}^{n} c(i,i')} =
      k\e_{i_{1}}^T\Omega +
      \omega_{i_2i_1}^{-1}\e_{i_2}^T\Omega + k\left(\frac{\sum_{i=1}^{n}
        c(i,i')R(i,i')}{\sum_{i=1}^{n} c(i,i')} +
      \frac{\e_{i'}^T\Omega}{\sum_{i=1}^{n} c(i,i')}\right).
    \end{displaymath}

    Note that the expression is the $i'$th row of $\Omega M_{\gamma,k}$,
    renormalized. The first term inside the parentheses is a convex combination
    of the $R(i,i')$, so the same bound holds for it as in
    \Cref{eq:R_i_i_estimate}. To obtain a bound for the second term, note that
    $c(i,i') \ge k^2||\Omega||^2_{\min,\gamma}$ whenever $ii' \in \bG(\Omega)$,
    because the number of multiplicands $\omega_{ij}$ in the definition of
    $c(i,i')$ is always at least two. Hence
    \begin{displaymath}
      \left\Vert\frac{\e_{i'}^T\Omega}{\sum_{i=1}^{n}
          c(i,i')}\right\Vert_\infty \le
      \frac{||\Omega||_\infty}{k^2||\Omega||^2_{\min,\gamma}} \le \frac{||\Omega||_{\infty}^{K-1}}{k^2||\Omega||_{\min,\gamma}^{K}}.
    \end{displaymath}
    By combining the estimates for the two terms inside the
    parentheses, we obtain the desired inequality.
  \end{proof}

  The inequality \Cref{eq:row-estimate} immediately follows from
  \Cref{step:eigenvector-in-cone,step:row-estimate}. This completes the proof
  of \Cref{prop:eigenvector_asymptotics}.
\end{proof}

The intuition behind \Cref{prop:eigenvector_asymptotics} is the
following. The cone $C_k$ generated by the rows of
$\Omega M_{\gamma,k}$ is the image of the cone $C$ generated by the
rows of $\Omega$ under the right action of $Q_{i_{K}}^{k}$, \ldots,
$Q_{i_1}^{k}$ in this order. Where $C$ ends up after these actions is
determined for the most part by the last action. The right action of
$Q_{i_1}^{k}$ is trivial on all standard basis vectors except one
which is translated by $k\e_{i_1}^T\Omega$. Thus we can think of the
right action of $Q_{i_1}^{k}$ as a map sending the cone $C$ towards
the direction of $\e_{i_1}^T\Omega$. This is why $\e_{i_1}^T\Omega$
appears in \Cref{eq:limit_of_difference} and why it appears with a
large weight $k$.

The second to last action is the action of $Q_{i_2}^{k}$,
sending the $C$ towards the direction of $\e_{i_2}^T\Omega$.
This action has a smaller effect than the last action, but a more
significant one than all the actions before. So $\e_{i_2}^T\Omega$
still appears in \Cref{eq:limit_of_difference}, but with a smaller
weight than $\e_{i_1}^T\Omega_k$. The rest of the actions turn out
to be negligible as $k \to \infty$.

\subsection{Projections to hyperplanes}
\label{sec:projections-to-hyperplanes}

The goal of this section is to describe the projections $p_{i \gets j}$ defined
in \Cref{eq:p-i-gets-j} by matrices.

Introduce the definition
\begin{equation}\label{eq:Q_i_gets_j}
  Q_{i\gets j} = I-\omega_{ij}^{-1}T_{ji}\Omega
\end{equation}
where $T_{ji}$ is the $n\times n$ matrix whose $(j,i)$-entry is 1 and
whose other entries are zero. The matrix $Q_{i\gets j}$ is defined if
$\omega_{ij} > 0$, in other words, if $ij$ is an edge of $\bG(\Omega)$.

Note that multiplication by $T_{ji}$ on the left has the effect of
zeroing out all rows except the $i$th row and moving the $i$th row
to the $j$th row. So in words, $Q_{i\gets j}$ is calculated in
the following way. Zero out all rows of $\Omega$ except the $i$th
row, move the $i$th row to the $j$th row, and then normalize it so
that the entry on the diagonal is 1. Subtracting this matrix from
the identity matrix gives $Q_{i\gets j}$. Note that the $j$th column of
$Q_{i\gets j}$ is zero.

\begin{lemma}\label{lemma:projection}
  If $ij$ is an edge of $\bG(\Omega)$, then the left action of
  $Q_{i\gets j}$ is the projection $p_{i \gets j}$.
\end{lemma}
\begin{proof}
  Recall that $p_{i \gets j}$ is the projection to the hyperplane
  $Z_i$ in the direction of $\e_j$ where $Z_i$ is the orthogonal
  complement of the $i$th row of $\Omega$. To show that the left
  action of $Q_{i\gets j}$ is the same transformation, it suffices to
  show that $Q_{i\gets j}\e_j = 0$ and $Q_{i\gets j}\vv = \vv$
  whenever $\e_i^T\Omega \vv = 0$.

  The first statement is clear, since the $j$th column of $Q_{i\gets
    j}$ is zero.

  For the second statement, note that the equation $Q_{i\gets j}\vv = \vv$ is
  equivalent to $T_{ji}\Omega\vv = 0$, which is in turn equivalent to saying
  that the $i$th coordinate of the vector $\Omega\vv$ is zero. But this is
  equivalent to $\e_i^T\Omega \vv = 0$, so we are done.
\end{proof}

\subsection{Convergence of linear maps}
\label{sec:convergence_of_linear_maps}

In this section, we show that the matrices $M_{\gamma,k}$ in
\Cref{theorem:convergence} act on certain codimension 1 subspaces by left
multiplication and these left actions asymptotically stabilize as $k$ goes to
infinity. We will use this fact to prove \Cref{theorem:convergence} at the end
of the section.

\begin{proposition}\label{prop:convergence-of-maps}
  Let $\Omega$ be the intersection matrix of a collection of curves
  satisfying the hypotheses of Penner's construction. Let
  $\gamma = (i_1\ldots i_Ki_1)$ be a closed path in $\bG(\Omega)$
  visiting each vertex at least once and let
  \begin{displaymath}
    M_{\gamma,k} = Q_{i_K}^{k} \cdots Q_{i_1}^{k}.
  \end{displaymath}
  Let $\ww_k^T$ be a left Perron--Frobenius eigenvector of
  $M_{\gamma,k}$. If a sequence of vectors $\vv_k$ converges to some
  vector $\vv^*$ and $\ww_k^T \vv_k = 0$ for all $k$, then
  \begin{displaymath}
    \lim_{k\to \infty} M_{\gamma,k}\vv_k = Q_{i_1\gets i_K}\cdots Q_{i_3\gets i_2}Q_{i_2\gets i_1}\vv^*.
  \end{displaymath}
\end{proposition}
\begin{proof}
  We show the following convergences by induction:
  \begin{align}
    \lim_{k\to \infty} Q_{i_1}^{k}\vv_k &= Q_{i_2 \gets i_1}\vv^* \label{eq:first_limit}\\
    \lim_{k\to \infty}
    Q_{i_2}^{k}Q_{i_1}^{k}\vv_k &= Q_{i_3 \gets i_2}Q_{i_2 \gets i_1}\vv^* \label{eq:second_limit}\\
    & \mathrel{\makebox[\widthof{=}]{\vdots}} \nonumber \\
    \lim_{k\to \infty} Q_{i_K}^{k} \cdots Q_{i_1}^{k}\vv_k
      &= Q_{i_1 \gets i_K} \cdots Q_{i_2 \gets i_1}\vv^* \label{eq:last_limit}
  \end{align}
  First we prove the base case \Cref{eq:first_limit}. Since
  \begin{displaymath}
    \lim_{k\to \infty}Q_{i_2 \gets i_1}\vv_k = Q_{i_2 \gets i_1}\vv^*,
  \end{displaymath}
  it suffices to show that
  \begin{displaymath}
    \lim_{k\to \infty} \left[Q_{i_1}^{k}\vv_k - Q_{i_2
      \gets i_1}\vv_k\right] = 0.
  \end{displaymath}

Suppose that the
  Perron--Frobenius eigenvectors $\ww_k^T$ are chosen as guaranteed by
  \Cref{prop:eigenvector_asymptotics}. Then we have
  \begin{displaymath}
    \lim_{k\to \infty}\left\lvert \left(k\e_{i_1}^T\Omega +
        \omega_{i_2i_1}^{-1}\e_{i_2}^T\Omega\right)\vv_k\right\rvert
    = \lim_{k\to \infty}\left\lvert \left(\ww_k^T -
        \left(k\e_{i_1}^T\Omega +
          \omega_{i_2i_1}^{-1}\e_{i_2}^T\Omega\right)\right)\vv_k\right\rvert
    = 0
  \end{displaymath}
  by \Cref{eq:limit_of_difference}. Therefore
  \begin{displaymath}
     \lim_{k\to \infty}\left(Q_{i_1}^{k} - Q_{i_2\gets i_1}\right)\vv_k
    = \lim_{k\to \infty}\left(kD_{i_1} \Omega +
        \omega_{i_2i_1}^{-1}T_{i_1i_2} \Omega\right)\vv_k= 0,
  \end{displaymath}
  since
  $kD_{i_1} \Omega + \omega_{i_2i_1}^{-1}T_{i_1i_2} \Omega$
  is a matrix whose $i_1$st row equals
  $k\e_{i_1}^T\Omega + \omega_{i_2i_1}^{-1}\e_{i_2}^T\Omega$
  and whose other rows are zero. This completes the proof of the base case
  \Cref{eq:first_limit}.

  Next, we describe how to obtain \Cref{eq:second_limit} from
  \Cref{eq:first_limit}. The remaining inductive steps are analogous.

  Let $M_{\gamma',k}$ be the cyclic permutation
  $Q_{i_1}^{k}Q_{i_K}^{k}\cdots Q_{i_2}^{k}$ of the product
  $M_{\gamma,k}$. If $\ww_k^T$ is a left
  Perron--Frobenius eigenvector of $M_{\gamma,k}$, then
  \begin{displaymath}
    \uu_k^T = \ww_{k}^TQ_{i_K}^{k}\cdots Q_{i_2}^{k}
  \end{displaymath}
  is a left Perron--Frobenius eigenvector of $M_{\gamma',k}$. On the
  other hand we have
  \begin{displaymath}
    \uu_k^T Q_{i_1}^{k}\vv_k = \ww_k^T M_{\gamma,k} \vv_k = \lambda_k
    \ww_k^T \vv_k = 0.
  \end{displaymath}
  In words, $Q_{i_1}^{k}\vv_k$ is orthogonal to the left
  Perron--Frobenius eigenspace of $M_{\gamma',k}$. Therefore we can
  apply \Cref{eq:first_limit} for $Q_{i_2}^k$ instead of $Q_{i_1}^{k}$
  and for $Q_{i_1}^{k}\vv_k$ instead of $\vv_k$ to obtain
  \Cref{eq:second_limit}.
\end{proof}

\newcommand\bb{{\bf b}}
\begin{proof}[Proof of \Cref{theorem:convergence}]
  Let $u_k(x) = (x-\lambda_k) v_k(x)$ and denote by $W_k$ the
  orthogonal complement of the left Perron--Frobenius eigenspace of
  $M_{\gamma,k}$. Note that $W_k$ is invariant under the left action
  of $M_{\gamma,k}$ and the induced linear transformation has
  characteristic polynomial $v_k(x)$.

  Let
  \begin{displaymath}
    \calB^* = \{\bb_1^*,\ldots,\bb_{n-1}^*\}
  \end{displaymath}
  be a basis for $Z_{i_1}$,
  the orthogonal complement of the $i_1$st row of $\Omega$. For each
  $k$, choose a basis
  \begin{displaymath}
    \calB_k = \{\bb_1^k,\ldots,\bb_{n-1}^k\}
  \end{displaymath}
  for $W_k$ such that $\bb_i^k \to\bb_i^*$ for
  $i = 1, \ldots, n-1$. This is possible, since the subspaces $W_k$
  converge to $Z_{i_1}$ by \Cref{prop:eigenvector_asymptotics}.

  By \Cref{lemma:projection}, the left action of
  $Q_{i_1\gets i_K}\cdots Q_{i_3\gets i_2}Q_{i_2\gets i_1}$, restricted to the
  hyperplane $Z_{i_1}$ equals $f_{\gamma}$. Let $A^*$ be the matrix describing
  this left action in the basis $\calB^*$. Let $A_k$ be the matrix describing the
  left action of $M_{\gamma,k}$ on $W_k$ in the basis $\calB_k$.

  By \Cref{prop:convergence-of-maps}, we have
  \begin{displaymath}
    \lim_{k\to \infty} M_{\gamma,k}\bb_i^k = Q_{i_1\gets i_K}\cdots Q_{i_3\gets
      i_2}Q_{i_2\gets i_1} \bb_i^*
  \end{displaymath}
  for $i = 1, \ldots, n-1$, therefore $A_k \to A$ and
  $\chi(A_k) \to \chi(A)$. Since $\chi(A_k) = u_k(x)$ and $\chi(A) = v(x)$,
  this completes the proof.
\end{proof}

\section{Homotopy invariance}\label{sec:homotopy-invariance}

The goal of this section is to show that the eigenvalues of the linear
transformation $f_\gamma$ defined in \Cref{eq:f-gamma} are invariant under
homotopy of $\gamma$. As a result, we will be able to determine the eigenvalues
of $f_\gamma$ for any contractible $\gamma$. It turns out that the only
eigenvalues in these cases are 0 and 1.

We say that the closed paths $\gamma$ and $\gamma'$ are homotopic in
$\bG(\Omega)$ if the naturally associated maps from $S^1$ to
$\bG(\Omega)$ are homotopic. It is easy to see that two closed paths
are homotopic if and only if they are connected by a sequence of the
insertions and removals of backtrackings (paths of the form $(iji)$)
and cyclic permutations of the vertices. Since $f_\gamma$ is not
defined for a closed path of length 0, we require that all closed
paths appearing in such a sequence have length at least two.

\begin{proposition}\label{prop:homotopy}
  If $\gamma$ and $\gamma'$ are homotopic closed paths in
  $\bG(\Omega)$, then the characteristic polynomials of $f_\gamma$ and
  $f_{\gamma'}$ are equal.
\end{proposition}
\begin{proof}
  First, we will show that removing or inserting a backtracking to $\gamma$
  does not change $f_\gamma$ as long as the last edge of $\gamma$ is not
  changed. It is easy to see that it is necessary to require at least
  that the starting vertex of $\gamma$ is fixed, since the domain of
  $f_\gamma$ depends on the first vertex of $\gamma$.

  Since inserting a backtracking is the inverse operation of removing one, it
  suffices to show the statement for removals. For this, let
  $\gamma = (i_1 \ldots i_K i_1)$, let $2\le k\le K-1$ and suppose that
  $i_{k-1} = i_{k+1} = i$. We will to show that the removal of the backtracking
  $(i_{k-1} i_k i_{k+1})$ leaves $f_\gamma$ unchanged.

  If $k \ge 3$, then the composition
  \begin{displaymath}
    p_{i_{k+2} \gets i} \circ p_{i \gets i_k} \circ p_{i_{k} \gets i} \circ p_{{i} \gets i_{k-2}}
  \end{displaymath}
  appears in the formula \Cref{eq:f-gamma}. We are using the convention that
  $i_{K+1} = i_1$. However, this is the same as
  $p_{i_{k+2} \gets i} \circ p_{i \gets i_{k-2}}$ for the following reasons.
  The image of $p_{i \gets i_{k-2}}$ is the hyperplane $Z_i$, the orthogonal
  complement of the $i$th row of $\Omega$. The subsequent projection,
  $p_{i_{k} \gets i}$ is in the direction of $\e_i$. However, we have
  $\e_i \in Z_i$, because the diagonal entries of $\Omega$ are zero, so
  $\e_i^T \Omega \e_i = 0$. Hence the image of
  $p_{i_{k} \gets i} \circ p_{i \gets i_{k-2}}$ is still inside $Z_i$. As a
  consequence, the next projection, $p_{i \gets i_k}$, which also projects
  onto $Z_i$, does not have any effect. This shows that
  \begin{displaymath}
    p_{i_{k+2} \gets i} \circ p_{i \gets i_k} \circ p_{i_{k} \gets i}
    \circ p_{{i} \gets i_{k-2}} = p_{i_{k+2} \gets i} \circ p_{i_{k} \gets i}
    \circ p_{{i} \gets i_{k-2}}.
  \end{displaymath}
  But now the subsequent projections $p_{i_{k+2} \gets i}$ and
  $p_{i_{k} \gets i}$ are both projections in the direction of $\e_{i}$, hence
  $p_{i_{k+2} \gets i} \circ p_{i_{k} \gets i} = p_{i_{k+2} \gets i}$. So we
  indeed end up with shorter the composition
  $p_{i_{k+2} \gets i} \circ p_{i \gets i_{k-2}}$.

  If $k = 2$, this argument needs to be slightly modified. We now have
  \begin{displaymath}
    f_\gamma = (p_{i_1 \gets i_K} \circ  \cdots \circ p_{i_4 \gets i}
    \circ p_{i \gets i_2} \circ p_{i_2 \gets i})|_{Z_{i}}.
  \end{displaymath}
  The image of $Z_i$ under $p_{i_2 \gets i}$ is contained in $Z_i$, since
  $\e_i \in Z_i$. Now the same arguments as above show that $p_{i \gets i_2}$
  and then $p_{i_2 \gets i}$ can be omitted from the composition. This
  completes the proof of the fact that $f_\gamma$ is invariant under homotopy
  of $\gamma$ rel the last edge of $\gamma$.

  Another fact we need is that cyclic permutation of the vertices of $\gamma$
  does not change the characteristic polynomial of $f_\gamma$. One can see this
  directly from the formula \Cref{eq:f-gamma}, because cyclic permutation
  of the vertices of $\gamma$ changes $f_\gamma$ by conjugation.

  We can now give the proof of the proposition. Suppose that $\gamma$ and
  $\gamma'$ are homotopic and therefore connected by a sequence of insertions
  and removals of backtrackings and cyclic permutations of the vertices. Since
  our paths have length at least 2, we can always permute the vertices before
  an insertion of removal of a backtracking so that the last edge is unchanged.
  So in each step $f_\gamma$ either does not change or it changes by
  conjugation. Either way, the characteristic polynomial does not change.
\end{proof}

\begin{corollary}\label{cor:contractible-almost-identity}
  If the matrix $\Omega$ has size $n\times n$ and the path $\gamma$ is contractible,
  then the characteristic polynomial of $f_\gamma$ takes the form
  \begin{displaymath}
    \chi(f_\gamma) = x(x-1)^{n-2}.
  \end{displaymath}
\end{corollary}
\begin{proof}
  By \Cref{prop:homotopy}, the characteristic polynomial of $f_\gamma$
  is the same as that of $f_{\gamma'}$ where $\gamma' = (iji)$ is a path of
  length two. Writing out the definition of $f_{\gamma'}$, we have
  \begin{displaymath}
    f_{(iji)} = (p_{i \gets j} \circ p_{j\gets i})|_{Z_{i}}.
  \end{displaymath}
  As we have observed in the proof of \Cref{prop:homotopy}, we have
  $\e_i \in Z_i$, so the image of $Z_i$ under $p_{j\gets i}$ is contained in
  $Z_i$. Therefore $p_{i \gets j}$ can be omitted from the formula and
  $f_{(iji)}$ is just the projection $p_{j\gets i}|_{Z_{i}}$ projecting $Z_i$
  to a codimension 1 subspace of $Z_i$. Hence $\chi(f_\gamma) = x(x-1)^{n-2}$
  as stated.
\end{proof}

\section{A simple criterion for realizing degrees}
\label{sec:rank-criterion}

In this section, we give a simple way to certify that a given degree can be
realized on a given surface.


We will need the following fact relating the rank of the intersection matrix
$\Omega$ to the 1-eigenspaces of products of the matrices $Q_i$.

\begin{proposition}\label{prop:multiplicity-of-one}
  Let $\psi$ be a pseudo-Anosov map arising from Penner's construction using a
  collection of curves with an $n \times n$ intersection matrix $\Omega$. If
  $M$ is the product of the $Q_i$ describing $\psi$, then 1 is an eigenvalue of
  $M$ with multiplicity $n-r$ where $r = \rank(\Omega)$. In particular, the
  characteristic polynomial of $M$ takes the form $(x-1)^{n-r}p(x)$ where
  $\deg(p) = r$ and $p(1) \ne 0$.
\end{proposition}
\begin{proof}
  First we show that the multiplicity is at least $n-r$.
  This is because the left action of every $Q_i = I + D_i\Omega$ is the
  identity on the null space $\Nul(\Omega)$ of $\Omega$, so the left action
  of $M$ on $\Nul(\Omega)$ is also the identity. Moreover, the
  dimension of $\Nul(\Omega)$ is $n-r$.

  Now we turn to showing that the multiplicity is at most
  $n-r$. Since $M$ acts on $\Nul(\Omega)$ as the identity,
  it has a well-defined left action
  \begin{displaymath}
    \ell_M: \hV \to \hV
  \end{displaymath}
  on the quotient space $\hV = \RR^n/\Nul(\Omega)$. It suffices to show that 1
  is not an eigenvalue of $\ell_M$.

  In other to show this, we will consider the quadratic form $h$ on $\RR^n$
  defined by the formula
  \begin{displaymath}
    h(\vv) = \frac12 \vv^T \Omega \vv.
  \end{displaymath}
  The function $h$ can be thought of as a height function on $\RR^n$. It was
  shown in Proposition 2.1 of \cite{ShinStrenner15} that
  \begin{equation}\label{eq:height-increase}
    h(Q_i\vv) - h(\vv) = ||Q_i\vv - \vv||^2
  \end{equation}
  for all $i = 1, \ldots, n$ and $\vv\in \RR^n$. In words, the matrices $Q_i$ act on
  $\RR^n$ by not decreasing the height. Moreover, there is no increase in the
  height if and only if $\vv$ is fixed by $Q_i$.

  Now suppose that $\ell_M(\hv) = \hv$ for some $\hv \in \hV$; we want to show
  that $\hv = 0$. Then there is some $\vv \in \RR^n$ such that
  $M\vv - \vv \in \Nul(\Omega)$, so $\Omega M \vv = \Omega \vv$. Hence
  \begin{displaymath}
    h(M\vv) = \frac12 \vv^TM^T \Omega M \vv = \frac12 \vv^T \Omega \vv = h(\vv).
  \end{displaymath}
  By \Cref{eq:height-increase}, this is only possible if $Q_i\vv = \vv$ for all
  $i = 1, \ldots, n$, since each $Q_i$ appears in $M$. It follows that
  $D_i\Omega\vv = 0$ for all $i = 1, \ldots, n$. Since $\sum_{i=1}^nD_i = I$, we have
  $\Omega\vv = 0$, hence $\hv$ is the zero vector of $\hV$. Therefore
  1 is indeed not an eigenvalue of $\ell_M$.
\end{proof}

\begin{corollary}\label{prop:rank-bounds-degree}
  Let $\lambda$ be a pseudo-Anosov stretch factor arising from Penner's
  construction using a collection of curves with intersection
  matrix $\Omega$. Then $\deg(\lambda) \le \rank(\Omega)$.
\end{corollary}
\begin{proof}
  The number $\lambda$ is an eigenvalue of a matrix $M$ and hence a root of the
  polynomial $p(x)$ in \Cref{prop:multiplicity-of-one}.
\end{proof}

The following theorem gives a recipe for constructing a stretch factor with a
specified algebraic degree.

\begin{theorem}\label{theorem:construction-of-degree-r}
  Let $\Omega$ be the intersection matrix of a collection of curves $C$
  satisfying the hypotheses of Penner's construction such that
  $\rank(\Omega) = r$. Let $\gamma = (i_1\ldots i_Ki_1)$ be a contractible
  closed path in $\bG(\Omega)$ visiting each vertex at least once and let
  \begin{displaymath}
    M_{\gamma,k} = Q_{i_K}^{k} \cdots Q_{i_1}^{k}.
  \end{displaymath}
  Let $f_{k}$ be the pseudo-Anosov mapping class described by the matrix
  $M_{\gamma,k}$ and let $\lambda_k$ be its stretch factor. Then
  $\deg(\lambda_k) = r$ for all but finitely many $k$.
\end{theorem}
\begin{proof}
  Let $u_k(x)$ be the characteristic polynomial of $M_{\gamma,k}$.
  \Cref{prop:multiplicity-of-one} shows that $u_k(x) = (x-1)^{n-r}p_k(x)$ where
  $p_k(1) \ne 0$ and the degree of $p_k(x)$ is $r$. Since $\lambda_k$ is a root
  of $p_k(x)$, it suffices to show that $p_k(x)$ irreducible if $k$ is large
  enough.



  By \Cref{theorem:convergence} and
  \Cref{cor:contractible-almost-identity}, we have
  \begin{displaymath}
    \lim_{k\to \infty}\frac{u_k(x)}{x-\lambda_k} = x(x-1)^{n-2}.
  \end{displaymath}

  So all roots of $p_k(x)$ except for $\lambda_k$ converge to either 0 or 1.
  Note that $p_k(0) \ne 0$, since the matrices $Q_i$ are invertible, hence so
  is $M_{\gamma,k}$. Therefore all roots are different from their limits for
  all $k$ and \Cref{lemma:galois-conjugates} implies that $p_k(x)$ is
  indeed irreducible if $k$ is large enough.
\end{proof}

As an immediate corollary of \Cref{theorem:construction-of-degree-r} and
\Cref{prop:train-track-orientable-foliation}, we have the following.

\begin{theorem}\label{theorem:curves-and-degree}
  If the surface $S$ admits a filling collection of curves $C$ with
  inconsistent markings such that $\rank(i(C,C)) = r$, then $r \in D(S)$. In
  addition, if $C$ is completely left-to-right or completely right-to-left,
  then $r \in D^+(S)$.
\end{theorem}

\section{Collections of curves}
\label{sec:collections_of_curves}

In this section, we construct filling collections of curves on various surfaces.
By \Cref{theorem:curves-and-degree}, the integers that arise are ranks of the
intersection matrices also arise as algebraic degrees of stretch factors.

We consider both orientable and nonorientable surfaces. In the orientable case, the
constructions are fairly straightforward. The nonorientable case is also not
difficult, but we will need to do more case-by-case analysis for surfaces with
small Euler characteristic.

\subsection{Orientable surfaces}

Recall the definition of completely left-to-right and completely right-to-left
collections of curves from \Cref{sec:oriented_collections}. Let $\overline{S}$
denote the closed surface obtained from $S$ by filling in the punctures.

\begin{proposition}\label{prop:examples_of_filling_pairs}
  Let $S$ be an orientable surface. For all
  $1\le r \le \frac12 \dim(\Teich(S))$ there is a filling collection
  of curves $C$ with inconsistent markings on $S$ such that
  $\rank(i(C,C)) = 2r$.

  Moreover, for $1\le r \le \frac12 \dim(H_1(\overline{S}))$, the collection
  $C$ can be chosen to be completely left-to-right or completely right-to-left.
\end{proposition}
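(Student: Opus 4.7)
The orientable setting reduces nicely: when $S$ is orientable and $C$ has inconsistent markings at every intersection, $C$ decomposes as $C = A \sqcup B$ into two multicurves and the intersection matrix takes the block form $\left(\begin{smallmatrix}0 & X \\ X^T & 0\end{smallmatrix}\right)$ with $X = i(A,B)$. Hence $\rank(i(C,C)) = 2\rank(X)$, and the task reduces to exhibiting, for every $1 \le r \le \tfrac12\dim(\Teich(S))$, a filling pair of multicurves $(A,B)$ on $S$ with $\rank(X) = r$.

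The plan is to assemble $(A,B)$ from two kinds of building blocks. The first is a \emph{chain block}: a sequence $c_1,\ldots,c_{2k}$ of simple closed curves in a genus-$k$ subsurface with $i(c_j,c_{j+1}) = 1$ and $i(c_j,c_l) = 0$ for $|j-l|\ge 2$; splitting by parity into $A'$ and $B'$ produces a $k\times k$ lower-bidiagonal matrix $X$ of rank $k$. A chain alone achieves rank only $k$ on a genus-$k$ subsurface, short of the full $3g-3$ in $S_g$, so I would augment the chain with transverse ``rung'' curves (essentially extending to a Humphries-style filling system) to realize the higher ranks on closed surfaces. In the punctured case each puncture contributes an extra potential rank of $1$, realized by including an additional curve that passes once through a small loop encircling a new puncture.

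The second block is a \emph{rank-preserving extension}: given a filling pair $(A',B')$ on a subsurface $S'\subset S$, enlarge it to a filling pair on $S$ by adjoining curves that are either parallel to existing ones or null-homologous loops encircling subsets of punctures. The corresponding new rows of $X$ lie in the span of existing rows, so these additions fill the rest of $S$ without changing $\rank(X)$. Combining chains, rungs, and rank-preserving extensions one can target any $r$ in the prescribed range.

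For the completely left-to-right refinement, the key observation is homological. When all crossings are of one type the geometric and algebraic intersection numbers agree up to a uniform sign, so $X$ is the matrix of the symplectic pairing of the classes $\{[a_i]\},\{[b_j]\}$ in $H_1(\overline S,\RR)$. Curves in $A$ are pairwise disjoint, so the $[a_i]$ span an isotropic subspace of the symplectic form, and likewise for $B$. Isotropic subspaces have dimension at most $g = \tfrac12\dim H_1(\overline S)$, which forces $\rank(X) \le g$ and explains the tighter bound in the statement. Conversely, for $r \le g$ one chooses $A$ and $B$ as oriented representatives of $r$ symplectically paired isotropic systems (coming from $r$ of the standard $\alpha$- and $\beta$-curves in a symplectic basis), orienting so that each crossing is left-to-right, and then completes to a filling system via the rank-preserving extension.

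The main obstacle is the interplay between fillingness and rank: the extensions used to fill the complement must be rank-neutral, i.e., every row of $X$ they introduce must already lie in the span of the existing rows. Writing down the chain-plus-rungs configuration together with appropriate rank-neutral fillers on each topological type of $S$ — while also keeping orientations consistent in the left-to-right subcase — is where the bulk of the case analysis lives and is the delicate part of the argument.
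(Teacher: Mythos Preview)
Your reduction to $\rank(X)=r$ and your homological explanation of the $r\le g$ bound in the left-to-right case are both correct and well put. But what you have written is a strategy rather than a proof: you yourself flag that ``the bulk of the case analysis lives'' in the rank-neutral filling step, and you do not carry it out. There is also a concrete worry with that step. Your proposed rank-preserving extensions rely on parallel copies separated by punctures or on peripheral loops; on a \emph{closed} surface neither mechanism is available, so you would need your chain-plus-rungs system alone to simultaneously fill $S_g$ and hit every rank $1\le r\le 3g-3$. You have not explained how to do that, and it is exactly the delicate point.

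The paper's argument avoids this difficulty by working top-down rather than bottom-up. It writes down one explicit maximal filling pair $(A,B)$ on $S_{g,n}$ with $|A|=|B|=3g-3+n$ and checks directly that $i(A,B)$ is nonsingular. Then, crucially, it observes that $A$ together with only the $g$ ``hole'' curves of $B$ already fills; hence deleting the remaining curves of $B$ one at a time keeps the pair filling while the rank drops from $3g-3+n$ down to $g$. For $r<g$ it merges those $g$ hole-curves pairwise into longer curves, again preserving filling with $A$ and forcing the columns of $i(A,B')$ to remain independent. The low-rank pictures are visibly left-to-right with suitable orientations, which handles the second clause. Genus $0$ and $1$ are dispatched by separate small pictures. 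The point is that deleting or merging curves can only lower rank, so one never has to manufacture rank-neutral fillers; the only thing to verify at each step is that filling persists, and that is arranged by the geometry of the initial configuration. If you want to rescue your bottom-up approach, you would need an equally explicit mechanism for extending a rank-$r$ pair on a subsurface to a filling pair on all of $S_g$ (closed) without introducing new independent rows---and that is not obviously easier than the paper's route.
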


\begin{proof}
  Since $S$ is orientable, $C$ is necessarily a union of two
  multicurves $A$ and $B$, where the curves of $A$ are marked
  consistently with the orientation of $S$, and the curves of $B$
  are marked inconsistently with the orientation of $S$. Note that
  $\rank(i(C,C)) = 2\rank(i(A,B))$.

  Let $S_{g,n}$ be the orientable surface of genus $g$ with $n$
  punctures. In the special case $(g,n)=(4,3)$,
  \Cref{fig:maximal_filling} shows a pair of filling multicurves $A$
  and $B$ on $S_{g,n}$ with $\frac12 \dim(\Teich(S)) = 3g-3+n$ simple
  closed curves in each multicurve. This construction generalizes for
  all $S_{g,n}$ (where $g\ge 2$) in the following way. The separating
  curves of $B$ shown on \Cref{fig:maximal_filling} divide $S_{4,3}$
  to two once-punctured tori on the left and right, and two
  twice-punctured tori in the middle. To draw the analogous picture
  for $S_{g,3}$, change the number of twice-punctured tori in the
  middle from two to $g-2$. Then, to get the curve systems on
  $S_{g,n}$ for arbitrary $n$, change the number of punctures in the
  once-punctured torus on the right, and change the number of parallel
  curves of $A$ and $B$ around the punctures accordingly.

  \begin{figure}[ht]
    \labellist
    \small\hair 2pt
    \pinlabel {$A$} at 13 45
    \pinlabel {$B$} at 45 52
    \endlabellist
    \centering
    \includegraphics[width=13cm]{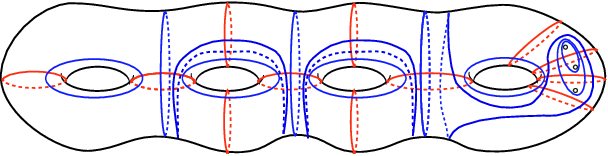}
    \caption{A maximal pair of filling multicurves on $S_{4,3}$.}
    \label{fig:maximal_filling}
  \end{figure}

  By numbering the curves in each multicurve left-to-right and
  top-to-bottom, $i(A,B)$ takes the form
  \begin{equation}\label{eq:max_rank_matrix}
    \left(
    \begin{array}{cccccccccccc}
      1 & 0 & 0 & 0 & 0 & 0 & 0 & 0 & 0 & 0 & 0 & 0\\
      1 & 2 & 2 & 1 & 0 & 0 & 0 & 0 & 0 & 0 & 0 & 0\\
      0 & 0 & 2 & 1 & 0 & 0 & 0 & 0 & 0 & 0 & 0 & 0\\
      0 & 0 & 0 & 1 & 0 & 0 & 0 & 0 & 0 & 0 & 0 & 0\\
      0 & 0 & 2 & 1 & 2 & 2 & 1 & 0 & 0 & 0 & 0 & 0\\
      0 & 0 & 0 & 0 & 0 & 2 & 1 & 0 & 0 & 0 & 0 & 0\\
      0 & 0 & 0 & 0 & 0 & 0 & 1 & 0 & 0 & 0 & 0 & 0\\
      0 & 0 & 0 & 0 & 0 & 2 & 1 & 2 & 2 & 1 & 0 & 0\\
      0 & 0 & 0 & 0 & 0 & 0 & 0 & 0 & 0 & 1 & 0 & 0\\
      0 & 0 & 0 & 0 & 0 & 0 & 0 & 0 & 2 & 1 & 2 & 2\\
      0 & 0 & 0 & 0 & 0 & 0 & 0 & 0 & 2 & 1 & 2 & 0\\
      0 & 0 & 0 & 0 & 0 & 0 & 0 & 0 & 2 & 1 & 0 & 0\\
    \end{array}\right).
  \end{equation}

  From the pattern, it is not hard to see that $i(A,B)$ has nonzero
  determinant for all $S_{g,n}$ where $g\ge 2$.

  Note that $A$ and the multicurve consisting of the $g$ curves of $B$
  around the holes still fill the surface. Therefore $A$ and any
  submulticurve of $B$ that contains those $g$ curves also fill. This
  gives examples for pairs of filling multicurves with intersection
  matrices of rank $r$ for $g\le r \le 3g-3+n$.
  \begin{figure}[ht]
    \centering
    \includegraphics[width=13cm]{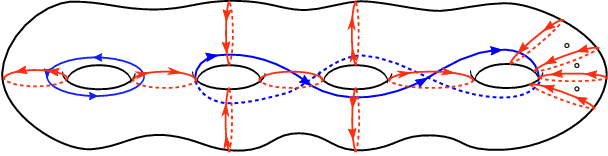}
    \caption{Pairs of multicurves realizing ranks $1\le r \le g$.}
    \label{fig:figure_eight_filling}
  \end{figure}

  To obtain examples for all ranks $1\le r\le g-1$, link together the
  $g$ curves around the holes one by one as on
  \Cref{fig:figure_eight_filling}, resulting in multicurves $B'$
  consisting of fewer and fewer curves. These multicurves still fill
  with $A$, and the columns of $i(A,B')$ are linearly independent,
  because for every curve in $B'$ there is a curve in $A$ that
  intersects only that curve. Note that when $1\le r \le g$, the pairs of
  multicurves in the examples shown on \Cref{fig:figure_eight_filling}
  are completely left-to-right if oriented as on the figure, because
  the red curves always cross the blue curves from left to right (cf.
  \Cref{fig:crossings}). This completes the case $g\ge 2$.

  \begin{figure}[ht]
    \labellist
    \small\hair 2pt
    \pinlabel {$A$} at 105 20
    \pinlabel {$B$} at 70 74
    \endlabellist
    \centering
    \includegraphics[width=6cm]{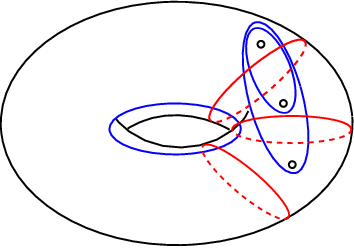}
    \caption{The case $g=1$.}
    \label{fig:torus_case}
  \end{figure}

  \begin{figure}[ht]
    \labellist
    \small\hair 2pt
    \pinlabel {$A$} at 13 45
    \pinlabel {$B$} at 152 45
    \endlabellist
    \centering
    \includegraphics[width=6cm]{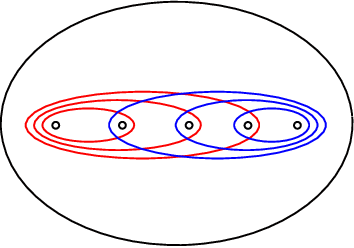}
    \caption{The case $g=0$.}
    \label{fig:disk_case}
  \end{figure}

  \Cref{fig:torus_case,fig:disk_case} show examples with
  $\rank(i(A,B)) = 3g-3+n$ in the cases $g=1,n\ge 1$ and $g=0,n\ge 4$,
  respectively. In all these cases, there is a curve in $B$ that
  intersects all curves of $A$ and which alone fills the surface with
  $A$. Hence once again we can drop curves from $B$ preserving the
  filling property and decreasing the rank. The rank 1 example for
  $g=1$ is completely left-to-right (with the appropriate markings),
  so this proves the second part of the proposition when $g=1$, while
  for $g=0$ there is nothing to prove since $\dim(H_1(\overline{S_{0,n}}))) =
  0$ for all $n$.

  In the remaining cases $g=1$, $n=0$ and $g=0$, $n<4$, the formula $\dim(\Teich(S_{g,n})) = 6g-6+2n$
  does not hold. The case $(g,n) = (1,0)$ is straightforward to check as we
  have $\dim(\Teich(S_{1})) = 2$. In the cases $g=0$, $n<4$, we have
  $\dim(\Teich(S_{g,n})) = 0$, so there is nothing to check.
\end{proof}

\subsection{Nonorientable surfaces}
\label{sec:nonor_surfaces}
Let $N_{g,n}$ be the nonorientable surface of genus $g$---the connected sum of
$g$ copies of the projective plane---with $n$ punctures. We also use the term
\emph{crosscap} for the projective plane. Analogously to the orientable case,
we abbreviate $N_{g,0}$ as $N_{g}$.

One way to obtain a nonorientable surface is to cut an open disk out
of a surface and glue the resulting boundary component to the boundary
of a M\"obius strip. In other words, the resulting surface is the connected sum of the
original surface and a crosscap. We refer to this process as
\emph{attaching a crosscap}. On figures, it is common to
mark the location of the above surgery by a cross inside a disk. For
example, \Cref{fig:nonor-left-to-right} on the left shows a surface
obtained from the sphere by attaching five crosscaps, while
\Cref{fig:N_5_rank_5} shows a surface obtained from $S_2$ by attaching
one crosscap.

A slightly different way of thinking about attaching a crosscap is
by cutting an open disk out and identifying antipodal points of the
resulting boundary component. That means that if a curve enters an
attached crosscap, then it exits the crosscap at the antipodal point.

On many figures in this section, we represent a nonorientable surface
as an orientable surface with crosscaps attached. We fix an
orientation on the complement of the crosscaps. We color parts of a
marked curve (cf.~\Cref{sec:penners-construction}) on such a surface
using two different colors depending on whether the embedding of the
regular neighborhood of the curve is orientation-preserving or
orientation-reversing on that part. Note that the color of the curve
changes when it goes through a crosscap.

The result of this section is the following.
\begin{proposition}\label{prop:curves_nonor}
  If $g\ge 3$ and $g+n \ge 5$ or $1 \le g \le 2$ and $g+n \ge 4$, then for all
  $3\le r \le \dim(\Teich(N_{g,n})) = 3g+2n-6$ there is a filling
  inconsistently marked collection of curves $C$ on $N_{g,n}$ such that
  $\rank(i(C,C)) = r$.

  If $(g,n) = (4,0)$ or $(3,1)$, then for all
  $3\le r \le \dim(\Teich(N_{g,n}))-1$ there is a filling
  inconsistently marked collection of curves $C$ on $N_{g,n}$ such that
  $\rank(i(C,C)) = r$.

  Moreover, when $3\le r \le \dim(H_1(N_g,\RR)) = g-1$, the collection $C$ on
  $N_{g,n}$ can be chosen to be completely left-to-right or completely
  right-to-left.
\end{proposition}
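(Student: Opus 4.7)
The plan is to mirror the orientable construction of \Cref{prop:examples_of_filling_pairs}: build one ``maximal'' filling inconsistently marked collection $C_0$ on $N_{g,n}$ achieving rank equal to $\dim(\Teich(N_{g,n})) = 3g+2n-6$, then realize all smaller ranks by deleting curves or by imitating the figure-eight linking of \Cref{fig:figure_eight_filling}.

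First I would produce the base example. Since a nonorientable surface does not admit a filling inconsistently marked union of two multicurves, at least some curves of $C_0$ must be routed through a crosscap so that the local orientation flips. My approach would be to take the orientable pair $(A,B)$ of \Cref{fig:maximal_filling} on an auxiliary orientable surface of matching Teichm\"uller dimension and then successively replace handles by pairs of crosscaps, rerouting the two curves passing through each replaced handle so that the new intersections remain inconsistently marked. The intersection matrix of $C_0$ then differs from the block pattern of \Cref{eq:max_rank_matrix} only by a controlled diagonal-type perturbation coming from the crosscap detours, so its rank can be shown to equal $3g+2n-6$ by the same row/column expansion used in the orientable case.

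Next, to obtain arbitrary $3 \le r < 3g+2n-6$ I would delete curves from $C_0$ one at a time. In the orientable proof each deletion drops the rank by exactly one because the deleted curve admits a ``witness'' curve intersecting only it; I would arrange the same property in the nonorientable construction by choosing $C_0$ so that a small fixed subcollection already fills $N_{g,n}$, allowing the remaining curves to be dropped independently. For the bottom range $3 \le r \le g-1$, which coincides with the range $3 \le r \le \dim(H_1(N_g,\RR))$ where orientable invariant foliations are claimed, I would link curves running around distinct crosscaps into figure-eight shapes as in \Cref{fig:figure_eight_filling} and orient them so every crossing is of the same type (\Cref{fig:crossings}); \Cref{prop:train-track-orientable-foliation} then yields the ``completely left-to-right'' refinement.

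The two sporadic cases $(g,n) = (4,0)$ and $(3,1)$, where the proposition asserts only $r \le \dim(\Teich)-1$, cannot be handled uniformly and the missing top rank is supplied by the explicit examples of \Cref{sec:two_examples}. I expect the main obstacle to be the combinatorial bookkeeping: verifying simultaneously that the modified collection is filling, that \emph{every} intersection remains inconsistently marked after the crosscap reroutings, and that the resulting intersection matrix attains the exact prescribed rank. The genus/puncture splits in the hypothesis ($g \ge 3$ with $g+n \ge 5$ versus $1 \le g \le 2$ with $g+n \ge 4$) will force several low-complexity base cases to be checked by hand, analogously to the case distinction at the end of the proof of \Cref{prop:examples_of_filling_pairs}.
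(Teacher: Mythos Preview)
Your outline diverges from the paper's argument and, more importantly, has a structural gap that would block it from working as stated.

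The paper does \emph{not} build a single maximal collection on $N_{g,n}$ and then delete curves. Instead it proves two auxiliary lemmas: \Cref{lemma:adding_a_crosscap} (attaching a crosscap to a surface carrying a filling collection $C$ and adding one, two, or three specific new curves $e,d_1,d_2$ increases $\rank(i(C,C))$ by exactly $0$, $2$, or $3$) and \Cref{lemma:adding-a-puncture} (duplicating a puncture and adding one or two curves increases the rank by $0$ or $2$). For $g\ge 5$ one starts from the already-constructed orientable collections $C_{h,n,2s}$ on $S_{h,n}$ and applies the crosscap lemma once or twice; for $g\le 4$ one writes down a handful of explicit collections with explicit intersection matrices and propagates them with the puncture lemma. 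The completely left-to-right examples are built directly as $r$ curves arranged around a central crosscap, with intersection matrix the all-ones-off-diagonal matrix $M_r$, and are unrelated to the figure-eight linking of \Cref{fig:figure_eight_filling}.

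Your plan has a concrete obstruction even before the bookkeeping issues you anticipate. Replacing handles by pairs of crosscaps preserves Euler characteristic, so starting from any $S_{h,n}$ you only ever reach $N_{2h,n}$; nonorientable surfaces of \emph{odd} genus are never produced this way, and there is no orientable $S_{h,m}$ with $\dim\Teich(S_{h,m})=\dim\Teich(N_{g,n})$ when $g$ is odd (the left side is even, the right side is odd). So half of the target surfaces are simply missed. A second gap is the deletion step: to go from rank $3g+2n-6$ down to $3$ you need, at every stage, that the surviving curves still fill $N_{g,n}$ \emph{and} that the rank drops by exactly one. Arranging a ``witness'' curve for each deletable curve while keeping a small filling core is exactly the delicate part, and your sketch does not explain how to do this on a nonorientable surface where $C$ cannot be split into two multicurves. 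The paper sidesteps both issues by building \emph{up} with lemmas that certify filling, inconsistent marking, minimal position, and the precise rank jump simultaneously.
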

See \Cref{table:nonor-curves} for a summary of the surfaces with small Euler characteristic.
\begin{table}[ht]
  \centering
  \begin{tabular}{c|ccccc}
      n$\backslash$g & 1 & 2 & 3 & 4 & 5 \\
    \hline
      0   & $\emptyset$ & $\emptyset$ & $\emptyset$ & -1 & E \\
      1   & $\emptyset$ & $\emptyset$ & -1 & E & E \\
      2   & $\emptyset$ & E & E & E & E \\
      3   & E & E & E & E & E \\
  \end{tabular}
  \caption{The surfaces for which every number between 3 and
    $\dim(\Teich(N_{g,n}))$ are realized as $\rank(i(C,C))$ are marked
    by E. The surfaces for which all these numbers except the maximum
    $\dim(\Teich(N_{g,n}))$ are realized are marked by -1. The
    surfaces that do not admit pseudo-Anosov maps are marked by
    $\emptyset$.}
  \label{table:nonor-curves}
\end{table}

The proof is based on \Cref{prop:examples_of_filling_pairs} and two lemmas that
we discuss next.

Suppose we have a filling collection of curves $C$ on a surface. Provided $C$
satisfies certain conditions, \Cref{lemma:attaching_a_crosscap} says that
attaching a crosscap to the surface allows extending $C$ to a filling
collection on the new surface in a way that $\rank(i(C,C))$ increases
by $0,2$ or $3$. \Cref{lemma:adding-a-puncture} says that adding a
puncture allows increasing $\rank(i(C,C))$ by $0$ or $2$.

In the following statement, let $\calN$ denote a small regular open neighborhood.

\begin{lemma}
  \label{lemma:attaching_a_crosscap}
  Let $C = \{c_i\}$ be a collection of inconsistently marked simple
  closed curves on a surface $S$ which are in pairwise minimal
  position. Let $R$ be a component of the complement of $\calN(C)$. Note
  that $\partial R$ is a union of arcs $a_j$, each of which lies on the
  boundary of some $\overline{\calN(c_{i_j})}$.

  Let $a_1$ and $a_2$ be two arcs such that
  \begin{itemize}
  \item $c_{i_1}$ and $c_{i_2}$ are non-isotopic and disjoint,
  \item there exists an arc $b$ inside $R$ connecting $a_1$ and $a_2$
    such that the markings of $c_{i_1}$ and $c_{i_2}$ induce
    different orientations on
    $\calN(c_{i_1}) \cup \calN(b) \cup \calN(c_{i_2}) \homeo S_{0,3}$.
  \end{itemize}

  Let $S'$ be the surface obtained by attaching a crosscap to $S$
  inside $R$. Consider the curves $d_1$, $d_2$ and $e$ on $S'$
  illustrated on \Cref{fig:three-new-curves}. The curves $d_1$ and
  $d_2$ are obtained from $c_{i_1}$ and $c_{i_2}$ by replacing the
  arcs $a_1$ and $a_2$ with arcs going around the crosscap in $R$. The
  curve $e$ is obtained from $c_{i_1}$ and $c_{i_2}$ by replacing the
  arcs $a_1$ and $a_2$ by two arcs going through the crosscap in $R$.
  Note that $e$ is simple, since $c_1$ and $c_2$ are disjoint. We endow
  $d_1,d_2$ and $e$ with markings illustrated on \Cref{fig:three-new-curves}.

  Finally, we introduce notations for the following three collections of curves on $S'$:
  \begin{align*}
    C' &= C \cup \{e\} \\
    C'' &= C \cup \{e,d_1\}\\
    C''' &= C \cup \{e,d_1,d_2\}.
  \end{align*}

  The statement of the lemma is that the following hold for $C'$, $C''$ and $C''$:
  \begin{enumerate}[(i)]
  \item \label{item:inconsistent} $C'$, $C''$ and $C'''$ are
    inconsistently marked;
  \item\label{item:minimal-pos} $C'$, $C''$ and $C'''$ are in
    pairwise minimal position;
  \item\label{item:ranks}
    $\rank(i(C,C)) = \rank(i(C',C')) = \rank(i(C'',C'')) - 2 =
    \rank(i(C''',C''')) - 3$;
  \item\label{item:filling} if $C$ fills $S$, then $C'$, $C''$
    and $C'''$ fill $S'$.
  \end{enumerate}
\end{lemma}

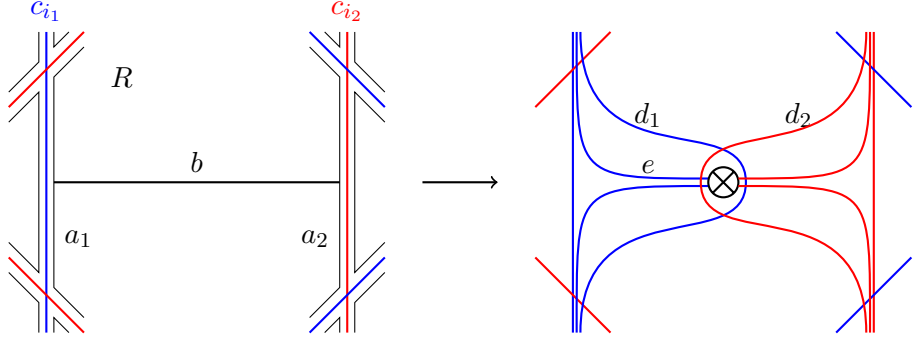
\begin{figure}
  \centering
  \begin{tikzpicture}[thick]
    \node at (1,1.4) {$R$};
    \node[right] at (0.1,-0.75) {$a_1$};
    \draw[blue] (0,-2) -- (0,2) node[above] {$c_{i_1}$};
    \draw[red] (-0.5,-1) -- (0.5,-2);
    \draw[red] (-0.5,1) -- (0.5,2);

    \draw[thin] (-0.1,-2) -- ++(0,0.4) -- ++(-0.4,0.4) ++(0,0.4) --
    ++(0.4,-0.4) -- ++(0,2.4) -- ++(-0.4,-0.4) ++(0,0.4) --
    ++(0.4,0.4) -- ++(0,0.4) ++(0.2,0) -- ++(0,-0.2) -- ++(0.2,0.2)
    ++(0.2,-0.2) -- ++(-0.4,-0.4) -- ++(0,-2.8) -- ++(0.4,-0.4)
    ++(-0.2,-0.2) -- ++(-0.2,0.2) -- ++(0,-0.2);

    \draw (0.1,0) -- node[above] {$b$} (3.9,0);
    \draw[->] (5,0) -- (6,0);
    \begin{scope}[xshift=4cm,rotate=180,]
    \node[left] at (0.1,0.75) {$a_2$};
      \draw[red] (0,-2) node[above] {$c_{i_2}$} -- (0,2);
      \draw[blue] (-0.5,-1) -- (0.5,-2);
      \draw[blue] (-0.5,1) -- (0.5,2);
    \draw[thin] (-0.1,-2) -- ++(0,0.4) -- ++(-0.4,0.4) ++(0,0.4) --
    ++(0.4,-0.4) -- ++(0,2.4) -- ++(-0.4,-0.4) ++(0,0.4) --
    ++(0.4,0.4) -- ++(0,0.4) ++(0.2,0) -- ++(0,-0.2) -- ++(0.2,0.2)
    ++(0.2,-0.2) -- ++(-0.4,-0.4) -- ++(0,-2.8) -- ++(0.4,-0.4)
    ++(-0.2,-0.2) -- ++(-0.2,0.2) -- ++(0,-0.2);
    \end{scope}

    \begin{scope}[xshift=7cm]
    \node at (1,0.9) {$d_1$};
    \node at (3,0.9) {$d_2$};
    \node at (1,0.2) {$e$};
    \draw[blue] (0,-2) -- (0,2);
    \draw[red] (-0.5,-1) -- (0.5,-2);
    \draw[red] (-0.5,1) -- (0.5,2);

    \draw[blue] (0.05,-2) .. controls (0.05,-0.05) and (0,-0.05) ..
    (1.8,-0.05);
    \draw[blue,yscale=-1] (0.05,-2) .. controls (0.05,-0.05) and (0,-0.05) .. (1.8,-0.05);
    \draw[blue] (0.1,-2) .. controls (0.1,-0.05) and (2.3,-1) ..
    (2.3,0) .. controls (2.3,1) and (0.1,0) .. (0.1,2);

    \draw (2,0) circle (0.2) ++(0.2/1.41,0.2/1.41) --
    ++(-0.4/1.41,-0.4/1.41) ++(0,+0.4/1.41) -- ++(0.4/1.41,-0.4/1.41) ;

    \begin{scope}[xshift=4cm,rotate=180,]
      \draw[red] (0,-2) -- (0,2);
      \draw[blue] (-0.5,-1) -- (0.5,-2);
      \draw[blue] (-0.5,1) -- (0.5,2);

    \draw[red] (0.05,-2) .. controls (0.05,-0.05) and (0,-0.05) ..
    (1.8,-0.05);
    \draw[red,yscale=-1] (0.05,-2) .. controls (0.05,-0.05) and (0,-0.05) .. (1.8,-0.05);
    \draw[red] (0.1,-2) .. controls (0.1,-0.05) and (2.3,-1) ..
    (2.3,0) .. controls (2.3,1) and (0.1,0) .. (0.1,2);
    \end{scope}
    \end{scope}
\end{tikzpicture}
  \caption{Creating three new curves by attaching a crosscap.}
  \label{fig:three-new-curves}
\end{figure}

\begin{proof}
  The statements (\ref{item:inconsistent}) and (\ref{item:filling}) are clear.

  Next, we describe how the statement (\ref{item:minimal-pos}) can be verified
  using the bigon criterion. The bigon criterion says that two curves are in
  minimal position if an only if they do not form a bigon: an embedded disk
  whose boundary is a union of an arc of one curve and an arc of the other and
  whose interior is disjoint from the two curves \cite[Proposition
  1.7]{FarbMargalit12}.

  First we show that $d_1$ does not form a bigon with the curves of
  $C$. (By symmetry, the same will be true for
  $d_2$.) Assume for contradiction that
  $d_1$ does form a bigon with some curve $f$ of $C$. Note that in $S'$, the curves
  $d_1$ and $c_{i_1}$ bound an annulus
  $\mathcal{A}$ with an attached crosscap. So there are two possibilities: either
  the bigon is on the side of $d_1$ contained in
  $\mathcal{A}$ or the other side. In the first scenario the bigon has to be
  entirely contained in
  $\mathcal{A}$, otherwise there would be a bigon between $f$ and
  $c_{i_1}$. But this cannot happen, since every curve of
  $C$ entering $\mathcal{A}$ through $d_1$ exits it through
  $c_{i_1}$. Using this last observation, one can see that the second scenario
  is also not plausible, since we could obtain a bigon between
  $f$ and $c_{i_1}$ in the \emph{original} surface
  $S$ by extending the bigon between $f$ and
  $d_1$ into the annulus $\mathcal{A}$ in $S$.

  Next, we show that there is no bigon between $e$ and the curves of
  $C$. For this, note that $c_{i_1}$, $c_{i_2}$ and $e$ bound a pair of pants
  $\mathcal{P}$. If there was a bigon between $e$ and some curve $f$ of $C$, then there
  are again two possibilities: the bigon is on the side of $e$ contained in $\mathcal{P}$
  or the other side. The first scenario is impossible for reasons
  similar to above: every curve of $C$ entering $\mathcal{P}$ through $e$ exits
  it either through $c_{i_1}$ or through $c_{i_2}$. In the second
  scenario, consider the arc $AB$ of $f$ that forms a bigon with $e$.
  Following $f$ from this arc in both directions into $\mathcal{P}$ until $f$
  exits $\mathcal{P}$ yields a longer arc $A'B'$ of $f$ (\Cref{fig:pants}). One
  endpoint of this longer arc is on $c_{i_1}$, the other is on
  $c_{i_2}$, otherwise $c_{i_1}$ or $c_{i_2}$ forms a bigon with $f$.
  Note that the arc $AB$ of $f$ is part of the original surface $S$ so
  it does not intersect the core curve of the attached crosscap.
  However, from the fact that $A'$ and $B'$ cannot both lie on
  $c_{i_1}$ or $c_{i_2}$, it is easy to see that the arc $AB$ of $e$
  forming the other side of the bigon goes through the crosscap and
  hence intersects its core curve once. So the arc $AB$ of $e$ and the
  arc $AB$ of $f$ cannot be homotopic rel $A$ and $B$, a
  contradiction.

  \begin{figure}[htb]
    \labellist
    \small\hair 2pt
    \pinlabel {$c_{i_1}$} [ ] at 52 132
    \pinlabel {$c_{i_2}$} [ ] at 110 131
    \pinlabel {$e$} [ ] at 145 168
    \pinlabel {$\mathcal{P}$} [ ] at 78 162
    \pinlabel {$A'$} [ ] at 40 76
    \pinlabel {$B'$} [ ] at 100 76
    \pinlabel {$A$} [ ] at 48 18
    \pinlabel {$B$} [ ] at 140 35
    \pinlabel {$f$} [ ] at 125 2
    \endlabellist
    \centering
    \includegraphics[scale=0.5]{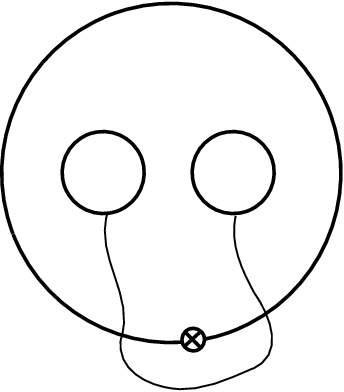}
    \caption{Ruling out the possibility of a bigon
      between $e$ and a curve $f\in C$.}
    \label{fig:pants}
  \end{figure}

  To show that $d_1$ and $d_2$ do not form a bigon, note that they intersect
  twice, so there are only four regions that are candidates for bigons. One of
  them contains the crosscap, so it is not a bigon. Two other regions are ruled
  out because $c_{i_1}$ and $c_{i_2}$ are not nullhomotopic. The fourth region
  is a bigon if any only if $c_{i_1}$ and $c_{i_2}$ are isotopic in $S$, but
  our assumption is that they are not isotopic. Hence $d_1$ and $d_2$ are in
  minimal position.

  Finally, the pairs $(d_1,e)$ and $(d_2,e)$ are
  checked similarly, again using the fact that $c_{i_1}$ and $c_{i_2}$
  are not nullhomotopic and not isotopic. This finishes the proof of (\ref{item:minimal-pos}).

  Let $C_0 = C - \{c_{i_1},c_{i_2}\}$. We can write $i(C''',C''')$ in
  the following block form.
  \begin{multline*}
    i(C''',C''') =  \\
    =\begin{pmatrix}
      0 & i(C_0,c_{i_1}) & i(C_0,c_{i_2}) & i(C_0,d_1) & i(C_0,d_2) &
      i(C_0,e) \\
      i(c_{i_1},C_0) & 0 & i(c_{i_1},c_{i_2}) & i(c_{i_1},d_1) & i(c_{i_1},d_2) &
      i(c_{i_1},e) \\
      i(c_{i_2},C_0)  & i(c_{i_2},c_{i_1}) & 0 & i(c_{i_2},d_1) & i(c_{i_2},d_2) &
      i(c_{i_2},e) \\
      i(d_1,C_0)  & i(d_1,c_{i_1}) & i(d_1,c_{i_2}) & 0 & i(d_1,d_2) &
      i(d_1,e) \\
      i(d_2,C_0)  & i(d_2,c_{i_1}) & i(d_2,c_{i_2}) & i(d_2,d_1) & 0 &
      i(d_2,e) \\
      i(e,C_0)  & i(e,c_{i_1}) & i(e,c_{i_2}) & i(e,d_1) & i(e,d_2) &
      0 \\
    \end{pmatrix} = \\
    =\begin{pmatrix}
      0 & X & Y & X & Y & X + Y \\
      X^T & 0 & 0 & 0 & 0 & 0 \\
      Y^T  & 0 & 0 & 0 & 0 & 0 \\
      X^T  & 0 & 0 & 0 & 2 & 2 \\
      Y^T  & 0 & 0 & 2 & 0 & 2 \\
      X^T + Y^T  & 0 & 0 & 2 & 2 & 0 \\
    \end{pmatrix} \sim
    \begin{pmatrix}
      0 & X & Y & 0 & 0 & 0 \\
      X^T & 0 & 0 & 0 & 0 & 0 \\
      Y^T  & 0 & 0 & 0 & 0 & 0 \\
      0 & 0 & 0 & 0 & 2 & 2 \\
      0  & 0 & 0 & 2 & 0 & 2 \\
      0  & 0 & 0 & 2 & 2 & 0 \\
    \end{pmatrix},
  \end{multline*}
  where $X = i(C_0,c_{i_1})$ and $Y = i(C_0,c_{i_2})$ and where the
  last relation is the equivalence under column and row operations.
  The upper left $3\times 3$ block is $i(C,C)$, and the lower right
  $3\times 3$ block is invertible. Hence $\rank(i(C''',C''')) =
  \rank(i(C,C)) + 3$.

  The calculation is analogous for $i(C'',C'')$ and $i(C',C')$. In
  the first case, we have the invertible matrix $
  \begin{pmatrix}
    0 & 2 \\
    2 & 0 \\
  \end{pmatrix}$
  in the lower right corner. In the second case, the lower right
  corner is a single zero entry, hence
  $\rank(i(C',C')) = \rank(i(C,C))$.
\end{proof}

\begin{lemma}\label{lemma:adding-a-puncture}
  Let $C$ be a filling collection of inconsistently marked
  simple closed curves on a surface $S$ with at least one puncture.
  Suppose that the curves of $C$ are in pairwise minimal position.

  Then there is a point $p \in S -C$ and marked simple closed curves
  $d$ and $e$ on $S - \{p\}$ such that $C' = C \cup \{d\}$ and
  $C'' = C \cup \{d,e\}$ are filling inconsistently marked
  collections on $S-\{p\}$, and the curves in each collection are in pairwise
  minimal position. Moreover, we have
  \begin{displaymath}
    \rank(i(C,C)) = \rank(i(C',C')) = \rank(i(C'',C'')) - 2.
  \end{displaymath}
\end{lemma}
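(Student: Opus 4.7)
My approach mimics the proof of the preceding lemma (the crosscap analogue): I will exhibit explicit curves $d$ and $e$ on $S - \{p\}$ and verify the required properties through a block intersection matrix computation. The construction is as follows. Fix any curve $c_k \in C$ and a tubular neighborhood $N(c_k) \cong c_k \times [-1,1]$ chosen thin enough to be disjoint from all punctures of $S$ and so that the other curves of $C$ meet $N(c_k)$ only in vertical fibers. Let $R$ be a complementary region of $C$ adjacent to $c_k$. Choose $p \in R \cap N(c_k)$ of the form $(p_0, s_0)$ with $s_0 > 0$, and set $d = c_k \times \{s_1\}$ for some $s_0 < s_1 < 1$, so that $p$ lies in the thin annulus cobounded by $c_k$ and $d$. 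Take $e$ to be a simple closed curve on $S - \{p\}$ that is disjoint from $C$, meets $d$ transversely in exactly two points, and cobounds with an arc of $d$ a bigon in $R$ containing $p$; such $e$ is easily constructed by joining a small arc looping around $p$ in the thin annulus with a returning arc through the sub-region of $R$ on the far side of $d$ from $c_k$.

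With these choices, $i(C,d) = i(C,c_k) =: v$ equals the $k$-th column of $A := i(C,C)$, while $i(C,e) = 0$, $i(d,e) = 2$, and $i(d,d) = i(e,e) = 0$. The intersection matrix therefore takes the block form
\begin{displaymath}
i(C'', C'') = \begin{pmatrix} A & v & 0 \\ v^T & 0 & 2 \\ 0^T & 2 & 0 \end{pmatrix}.
\end{displaymath}
The lower-right $2 \times 2$ block is invertible and its Schur complement is $A$, giving $\rank(i(C'', C'')) = \rank(A) + 2$. For $C' = C \cup \{d\}$, the fact that $v$ is the $k$-th column of $A$ together with $A_{kk} = 0$ forces the $k$-th and last columns of $\begin{pmatrix} A & v \\ v^T & 0 \end{pmatrix}$ to coincide (and likewise for the rows), so its rank collapses to $\rank(A)$. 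Inconsistent markings are arranged by letting $d$ inherit the marking of $c_k$ (which propagates the inconsistency of $c_k$'s crossings to $d$'s crossings with $C\setminus\{c_k\}$) and by choosing a marking on $e$ that makes both crossings with $d$ inconsistent, possible since those crossings lie on opposite sides of the bigon. Minimal position of $d$ with $C$ is immediate from its being a parallel copy of $c_k$ inside $N(c_k)$, and $e$ meets $d$ essentially by the bigon criterion: the bigon they cobound is once-punctured (it contains $p$) and therefore not an empty bigon in $S - \{p\}$.

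The principal obstacle is verifying the filling condition for $C'$ on $S - \{p\}$; naive choices such as a small peripheral loop around $p$ leave an annular complementary region and fail. The parallel-copy construction sidesteps this: in each complementary region of $C$ that $d$ traverses, $d$ cuts across in an arc, subdividing the region either into two disks or into a disk and a once-punctured disk around an existing puncture of $S$ (which, by the choice of $N(c_k)$, lies outside the thin annulus). In the distinguished region $R$, the thin strip between $c_k$ and $d$ becomes a once-punctured disk around $p$ after removing $p$, and the sub-region of $R$ beyond $d$ is a disk, or a once-punctured disk if $R$ originally contained a puncture of $S$. Hence $C \cup \{d\}$ fills $S - \{p\}$. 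Adding $e$ then subdivides the once-punctured strip containing $p$ into a peripheral once-punctured disk around $p$ and a companion disk, preserving filling for $C'' = C \cup \{d, e\}$ and completing the verification.
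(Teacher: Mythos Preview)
Your approach is essentially the paper's, and the rank computation and the filling argument for $C'$ are fine. But there is a genuine gap in your treatment of $e$.

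You verify minimal position of $e$ and $d$ by checking that the bigon on the $T$-side (between $c_k$ and $d$) contains $p$. However, $e$ crosses $d$ twice, so there is a \emph{second} bigon-candidate: the region in $B$ (the ``far side'' of $d$) cobounded by the arc $e_B$ and the complementary sub-arc of $d$. You allow $R$ to be an arbitrary complementary region adjacent to an arbitrary $c_k$; if $R$ happens to be a disk, this $B$-side region is an honest empty bigon. Then $e$ and $d$ are not in minimal position, and in fact $e$, being a simple closed curve contained in the disk $R$, bounds a once-punctured disk about $p$ in $S-\{p\}$ and is peripheral. In that case $i(d,e)=0$, not $2$, and your block matrix for $i(C'',C'')$ is wrong. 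Note also that your argument never actually invokes the hypothesis that $S$ has a puncture; this is a symptom of the same problem.

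The fix is exactly what the paper does: choose $R$ to be a once-punctured complementary region (one exists since $C$ fills and $S$ has a puncture), take $c_k$ to be a curve on $\partial R$, and route the returning arc $e_B$ so that it separates the original puncture from the rest of $\partial R$. Then the $B$-side region cobounded by $e_B$ and $d$ contains the original puncture, both bigon-candidates are once-punctured, $e$ is essential (it bounds a twice-punctured disk), and $i(d,e)=2$ as you need.
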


\begin{proof}
  \begin{figure}[htb]
    \labellist
    \small\hair 2pt
    \pinlabel {$d$} [ ] at 40 16
    \pinlabel {$e$} [ ] at 63 6
    \pinlabel {$R$} [ ] at 23 6
    \endlabellist
    \centering
    \includegraphics[width=7cm]{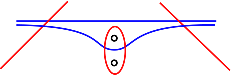}
    \caption{Creating two new curves by duplicating a puncture.}
    \label{fig:duplicating-puncture}
  \end{figure}
  Let $R$ be a component of $S-C$ which is a once-punctured disk. Let
  $p \in R$. Let $e$ be a curve surrounding the puncture and $p$ inside $R$.
  Let $d$ be a curve $c$ obtained from a curve on the boundary of $R$ by
  pulling it over $p$. (See \Cref{fig:duplicating-puncture}.) The properties of
  inconsistent marking, filling and minimal position are easy to verify. In
  addition, we have
  \begin{displaymath}
    i(C'',C'') =
    \begin{pmatrix}
      i(C_0,C_0) & \mathbf{x} & \mathbf{x} & 0 \\
      \mathbf{x}^T & 0 & 0 & 0 \\
      \mathbf{x}^T & 0 & 0 & 2 \\
      0 & 0 & 2 & 0 \\
    \end{pmatrix},
  \end{displaymath}
  where $C_0 = C - {c}$ and $\mathbf{x} = i(C_0,c)$. Note that the
  $i(C',C')$ is the submatrix obtained by deleting the last row and
  last column, and $i(C,C)$ is the submatrix obtained by deleting the
  last two rows and the last two columns. This proves the equation
  about the ranks.
\end{proof}

We are now ready to give the proof of \Cref{prop:curves_nonor}.

\begin{proof}[Proof of \Cref{prop:curves_nonor}]
  Since the constructions in the different cases have different flavors,
  we divide the proof into three parts. First we give examples for completely
  left-to-right collections of curves, then for unrestricted collections of
  curves when $g\ge 5$ and $g \le 4$, respectively.

  \begin{proofpart}[Completely left-to-right collections]
    \label{part:left-to-right}
    For any $g\ge 4$, $n\ge 0$ and $3\le r \le g-1$, we need to construct a
    filling and inconsistently marked completely left-to-right collection of
    curves on $N_{g,n}$ whose intersection matrix has rank $r$.

    \begin{figure}[ht]
      \centering
      \includegraphics[width=0.4\textwidth]{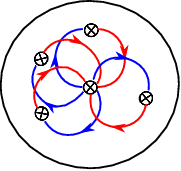}\quad
      \includegraphics[width=0.4\textwidth]{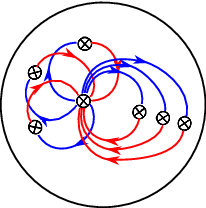}
      \caption{Rank 4 left-to-right collections on $N_5$ and $N_7$.
        Red arcs always cross blue arcs from the left to right.}
      \label{fig:nonor-left-to-right}
    \end{figure}

    When $r = g-1$ and $n = 0$, we think about $N_g$ and a sphere with $g$ crosscaps attached. We arrange $r$ curves around a central crosscap
    as shown on the left on \Cref{fig:nonor-left-to-right}. When the curves are
    marked as shown on \Cref{fig:nonor-left-to-right}, we obtain a completely
    left-to-right collection, because red arcs always cross the blue arcs from
    left to right (see the paragraphs before \Cref{prop:curves_nonor} for the
    explanation of the coloring convention). The intersection matrix is the
    $r\times r$ square matrix whose off-diagonal entries are 1 and whose
    diagonal entries are 0. Note that this matrix is invertible for all $r$:
    the inverse is the matrix whose off-diagonal entries are $1/(r-1)$ and
    whose diagonal entries are $-(r-2)/(r-1)$.

    For the cases $r < g-1$ and $n = 0$, we modify the previous arrangement by
    attaching crosscaps and replacing one of the curves with a set of new
    curves as on the right of \Cref{fig:nonor-left-to-right}. A bit of care
    should be taken here as the number of new curves is less than the number of
    attached crosscaps, whereas it may seem first to the eyes used to orientable
    surfaces that the two numbers are equal. This is because the curves that
    connect at the central crosscap are the antipodal ones. In the particular
    case shown by \Cref{fig:nonor-left-to-right}, the new set of curves
    consists of two curves, not three.

    We claim that this modification does not change the rank of the
    intersection matrix. For this, note that the intersection number of the new
    curves with the rest of the curves are proportional, so the corresponding
    columns and rows in the intersection matrix are scalar multiples of each
    other. Hence the rank of the intersection matrix is the same for the two
    examples on \Cref{fig:nonor-left-to-right}.

    When $n>0$, we consider the collection $C$ already constructed in the case
    $n = 0$, add $n$ disjoint isotopic copies of one of the curves in $C$, and
    arrange the $n$ punctures between the $n+1$ isotopic curves so that no two
    of them are isotopic in the punctured surface. These $n+1$ curves have the
    same intersection numbers with the other curves, hence the rank of the
    intersection matrix remains the same as in the case $n=0$. It is also clear
    that the new collection of curves remains completely left-to-right if the
    duplicated curves inherit their marking from the original curve.
  \end{proofpart}

  \begin{proofpart}[Unrestricted case, $g\ge 5$]
    Note that attaching a crosscap to the orientable surface $S_{g',n}$ yields the
    nonorientable surface $N_{2g'+1,n}$. Hence we can obtain nonorientable
    surfaces of odd (resp.~even) genus by attaching one (resp.~two) crosscap(s) to
    an orientable surface.

    During the proof of \Cref{prop:examples_of_filling_pairs}, we constructed
    for many $g',n,r'$ a collection of curves $C_{g',n,2r'}$ on $S_{g',n}$ such
    that $\rank(i(C_{g',n,2r'},C_{g',n,2r'})) = 2r'$. If $g' \ge 2$ and
    $r' \ge 2$, then $C_{g',n,2r'}$ has at least two complementary regions that
    allow applying \Cref{lemma:attaching_a_crosscap}: both of the two left-most
    regions work on \Cref{fig:maximal_filling,fig:figure_eight_filling}. In
    fact, \Cref{lemma:attaching_a_crosscap} can be applied subsequently for the
    two regions. That is, after applying it for the first region, the
    hypotheses of the lemma still hold for the other region. This gives
    examples for all triples $(g,n,r)$ where $g\ge 5$, $r \ne 3,5$ and $n$ is
    arbitrary.

    \begin{figure}[htb]
      \labellist
      \small\hair 2pt
      \pinlabel {$a_1$} [ ] at 15 43
      \pinlabel {$a_2$} [ ] at 88 43
      \pinlabel {$b_1$} [ ] at 48 51
      \pinlabel {$b_2$} [ ] at 130 49
      \pinlabel {$c$} [ ] at 101 63
      \endlabellist
      \centering
      \includegraphics[width=8cm]{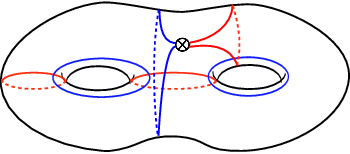}
      \caption{A rank 5 collection on $N_{5}$.}
      \label{fig:N_5_rank_5}
    \end{figure}

    Examples for the cases
    \begin{itemize}
    \item $g\ge 5$, $r=3$, $n$ is arbitrary,
    \item $g\ge 6$, $r=5$, $n$ is arbitrary,
    \end{itemize}
    have been given in \Cref{part:left-to-right}. The only case that
    remains is $g=5$, $r=5$. \Cref{fig:N_5_rank_5} gives an example
    when $n=0$. The intersection matrix is
    \begin{displaymath}
      i(C,C) =
      \begin{pmatrix}
        0 & 0 & 1 & 0 & 0 \\
        0 & 0 & 1 & 1 & 2 \\
        1 & 1 & 0 & 0 & 0 \\
        0 & 1 & 0 & 0 & 1 \\
        0 & 2 & 0 & 1 & 0 \\
      \end{pmatrix}
    \end{displaymath}
    and it is invertible. When $n>0$, we add parallel curves separating
    the punctures as in the end of \Cref{part:left-to-right}.
  \end{proofpart}

  \begin{proofpart}[Unrestricted case, $g\le 4$]
    \Cref{table:rank-ranges} summarizes the ranks we need to realize on the
    surfaces $N_{g,n}$ when $g \le 4$.

    \begin{table}[ht]
      \centering
      \begin{tabular}{c|cccc}
        n$\backslash$g & 1 & 2 & 3 & 4 \\
        \hline
        0   & $\emptyset$ & $\emptyset$ & $\emptyset$ & 3--5 (3,4,5)\\
        1   & $\emptyset$ & $\emptyset$ & 3--4 (3,4)& 3--8 (8)\\
        2   & $\emptyset$ & 3--4 (3,4)& 3--7 (7) & 3--10 \\
        3   & 3 (3) & 3--6 & 3--9 & 3--12 \\
        4   & 3--5 (4) & 3--8 & 3--11 & 3--14 \\
        5   & 3--7 & 3--10 & 3-13 & 3--16\\
      \end{tabular}
      \caption{Ranks to realize on nonorientable surfaces of genus at most 4.
        We give explicit examples of curve collections realizing the ranks
        shown in the parentheses and construct the other examples of using
        \Cref{lemma:attaching_a_crosscap,lemma:adding-a-puncture}.}
      \label{table:rank-ranges}
    \end{table}

    We construct only finitely many examples (shown in parentheses in
    \Cref{table:rank-ranges}). When $g \le 3$, these examples and
    \Cref{lemma:adding-a-puncture} take care of all cases. What is different in
    the case $g=4$ is that the surface with the smallest number of punctures
    ($N_4$) is closed, so we cannot apply \Cref{lemma:adding-a-puncture} to construct examples on $N_{4,1}$ from the examples on $N_4$. However, the rank
    3, 4 and 5 collections we will construct on $N_4$ still fill when a puncture is
    added, so the same collections realize the ranks 3, 4 and 5 on
    $N_{4,1}$. To realize 6 and 7, we apply
    \Cref{lemma:attaching_a_crosscap} for our rank 4 collections on
    $N_{3,1}$. Finally, we describe a rank 8 example explicitly.
    \Cref{lemma:adding-a-puncture} can then be used to complete the
    construction for all $N_{g,n}$ with $g=4$ and $n>1$.

    To complete the proof, we now give the examples for the cases listed in the parentheses in \Cref{table:rank-ranges}. We recall that other than the filling property, the collections of curves also need to be marked inconsistently. In other words, a blue and red arc should meet at every intersection.
    \begin{case}[$g=1$]
      \Cref{fig:genus-one} shows collections with
      \begin{displaymath}
        i(C,C) =
        \begin{pmatrix}
          0 & 2 & 2 \\
          2 & 0 & 2 \\
          2 & 2 & 0 \\
        \end{pmatrix}
        \quad \mbox{and} \quad
        i(C,C) =
        \begin{pmatrix}
          0 & 0 & 2 & 2 \\
          0 & 0 & 2 & 0 \\
          2 & 2 & 0 & 2 \\
          2 & 0 & 2 & 0 \\
        \end{pmatrix}.
      \end{displaymath}
      It is easy to see that both matrices are invertible.
      \begin{figure}[!htb]
        \centering
        \includegraphics[width=0.3\textwidth]{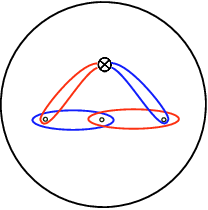}
        \quad
        \includegraphics[width=0.3\textwidth]{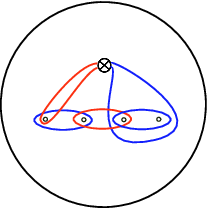}
        \caption{A rank 3 collection on $N_{1,3}$ and a rank 4
          collection on $N_{1,4}$. The surfaces are represented as spheres with added crosscaps and punctures.}
        \label{fig:genus-one}
      \end{figure}
    \end{case}

    \begin{case}[$g=2$]
      \Cref{fig:genus-two} shows collections with
      \begin{displaymath}
        i(C,C) =
        \begin{pmatrix}
          0 & 2 & 2 \\
          2 & 0 & 4 \\
          2 & 4 & 0 \\
        \end{pmatrix}
        \quad \mbox{and} \quad
        i(C,C) = \begin{pmatrix}
          0 & 2 & 2 & 2 \\
          2 & 0 & 2 & 2 \\
          2 & 2 & 0 & 4 \\
          2 & 2 & 4 & 0 \\
        \end{pmatrix}.
      \end{displaymath}
      Again, one can check that both matrices are invertible.

      \begin{figure}[ht]
        \centering
        \includegraphics[width=0.3\textwidth]{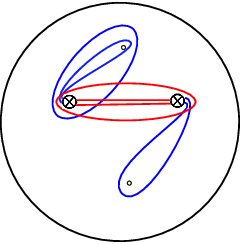}
        \quad
        \includegraphics[width=0.3\textwidth]{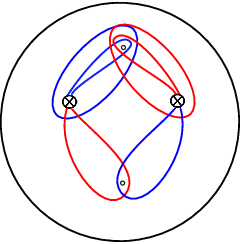}
        \caption{A rank 3 and rank 4 collection on $N_{2,2}$.}
        \label{fig:genus-two}
      \end{figure}
    \end{case}

    \begin{case}[$g=3$]
      \Cref{fig:genus-three-one-puncture} shows a filling collection
      with intersection matrix
      \begin{equation}\label{eq:genus-three-matrix}
        i(C,C) =
        \begin{pmatrix}
          0 & 0 & 1 & 0\\
          0 & 0 & 1 & 2\\
          1 & 1 & 0 & 1\\
          0 & 2 & 1 & 0\\
        \end{pmatrix},
      \end{equation}
      which has rank 4. Note that there is a complementary region with
      two disjoint curves on its boundary, hence
      \Cref{lemma:attaching_a_crosscap} indeed applies to yield rank 4, 6
      and 7 collections on $N_{4,1}$.

      By dropping the curve surrounding the hole which intersects only
      one other curve, the remaining three curves still fill and the
      intersection matrix is the lower right $3\times 3$ submatrix of
      \Cref{eq:genus-three-matrix}, which has rank 3.
      \begin{figure}[ht]
        \centering
        \includegraphics[width=0.5\textwidth]{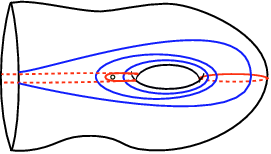}
        \caption{A rank 4 collection on $N_{3,1}$. Here the surface $N_{3,1}$ is represented by identifying antipodal points of the boundary component of the pictured orientable surface.}
        \label{fig:genus-three-one-puncture}
      \end{figure}

      On the left, \Cref{fig:genus-three-two-punctures-genus-four} shows a filling collection of curves on $N_{3,2}$ with intersection matrix
      \begin{displaymath}
        i(C,C) =
        \begin{pmatrix}
          0 & 2 & 2 & 2 & 2 & 2 & 4 \\
          2 & 0 & 0 & 2 & 4 & 4 & 4 \\
          2 & 0 & 0 & 2 & 4 & 2 & 2 \\
          2 & 2 & 2 & 0 & 2 & 2 & 4 \\
          2 & 4 & 4 & 2 & 0 & 0 & 4 \\
          2 & 4 & 2 & 2 & 0 & 0 & 2 \\
          4 & 4 & 2 & 4 & 4 & 2 & 0 \\
        \end{pmatrix},
      \end{displaymath}
      which has rank 7.
      \begin{figure}[ht]
        \centering
        \includegraphics[width=0.3\textwidth]{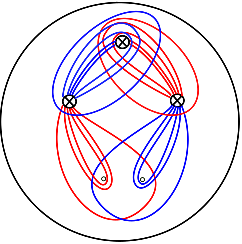}
        \quad
        \includegraphics[width=0.3\textwidth]{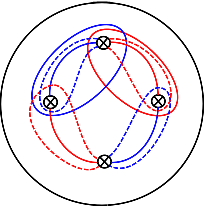}
        \caption{A rank 7 collection on $N_{3,2}$ and a rank 5 collection on $N_{4}$.}
        \label{fig:genus-three-two-punctures-genus-four}
      \end{figure}
    \end{case}

    \begin{case}[$g=4$]
      On the right, \Cref{fig:genus-three-two-punctures-genus-four} shows a filling collection of curves on $N_4$ with intersection matrix
      \begin{equation}\label{eq:genus-four-matrix}
        i(C,C) =
        \begin{pmatrix}
          0 & 2 & 2 & 2 & 2 \\
          2 & 0 & 2 & 2 & 2 \\
          2 & 2 & 0 & 2 & 2 \\
          2 & 2 & 2 & 0 & 4 \\
          2 & 2 & 2 & 4 & 0 \\
        \end{pmatrix},
      \end{equation}
      which has rank 5. By dropping both or one of the dashed curves,
      the remaining three or four curves still fill. The intersection
      matrices are the upper left $3\times 3$ and $4 \times 4$
      submatrices of \Cref{eq:genus-four-matrix}, which have rank 3 and
      rank 4, respectively.

      Finally, \Cref{fig:genus-four-one-punctures} shows a filling collection of curves on $N_{4.1}$ with intersection matrix
      \begin{displaymath}
        i(C,C) =
        \begin{pmatrix}
          0 & 2 & 2 & 2 & 2 & 2 & 4 & 0 \\
          2 & 0 & 2 & 2 & 2 & 2 & 4 & 0 \\
          2 & 2 & 0 & 4 & 4 & 4 & 8 & 0 \\
          2 & 2 & 4 & 0 & 0 & 0 & 0 & 0 \\
          2 & 2 & 4 & 0 & 0 & 2 & 2 & 2 \\
          2 & 2 & 4 & 0 & 2 & 0 & 2 & 2 \\
          4 & 4 & 8 & 0 & 2 & 2 & 0 & 4 \\
          0 & 0 & 0 & 0 & 2 & 2 & 4 & 0 \\
        \end{pmatrix}.
      \end{displaymath}
      It is easy to zero out the upper right and lower left $4\times 4$
      block using row and column operations. The remaining $4\times 4$
      matrices are invertible, hence $i(C,C)$ has rank 8.
      \begin{figure}[ht]
        \labellist
        \small\hair 2pt \pinlabel {$c_1$} [ ] at 34 96 \pinlabel {$c_2$}
        [ ] at 33 88 \pinlabel {$c_3$} [ ] at 40 82 \pinlabel {$c_4$} [
        ] at 24 50 \pinlabel {$c_5$} [ ] at 85 98 \pinlabel {$c_6$} [ ]
        at 89 88 \pinlabel {$c_7$} [ ] at 84 82 \pinlabel {$c_8$} [ ] at
        97 47
        \endlabellist
        \centering
        \includegraphics[width=0.5\textwidth]{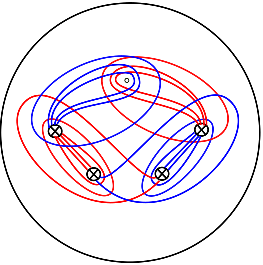}
        \caption{A rank 8 collection on $N_{4,1}$.}
        \label{fig:genus-four-one-punctures}
      \end{figure}
    \end{case}
  \end{proofpart}
  This completes the proof of \Cref{prop:curves_nonor}.
\end{proof}

\section{Two pseudo-Anosov examples for the sporadic cases}
\label{sec:two_examples}

We have tried but were not able to construct inconsistently marked collections
of curves with rank 6 intersection matrix on $N_4$ and rank 5 intersection
matrix on $N_{3,1}$. Since we have made many attempts using different perspectives and
were not successful, we conjecture that such collections of curves do not
exist. However, we will show that degree 6 and 5 stretch factors still exist on
the surfaces $N_4$ and $N_{3,1}$.

Due to the lack of suitable collections of curves, these constructions will not use Penner's construction. Instead, we will describe the mapping classes explicitly and compute the stretch factors using transition matrices of train track maps.

\begin{proposition}\label{prop:two-examples}
  We have $6 \in D(N_{4})$ and $5 \in D(N_{3,1})$.
\end{proposition}
\begin{proof}
  We will now represent the surface $N_{3,1}$ as a polygon with its sides identified. On \Cref{fig:tt_five}, the open circles on the boundary of each rectangle divide the boundary to six segments. The two parallel pairs of side which are not marked by an arrow are identified by a translation. The pair marked by an arrow is identified by a flip. The six open circles on the boundary identify to a single point, representing the puncture.

  The first picture of \Cref{fig:tt_five} shows a train track $\tau$
  on this surface. The train track has two switches, seven branches and
  four complementary regions. Three of these regions is a trigon, one
  is a monogon.

\begin{figure}[htb]
    \centering
    \includegraphics[scale=0.7]{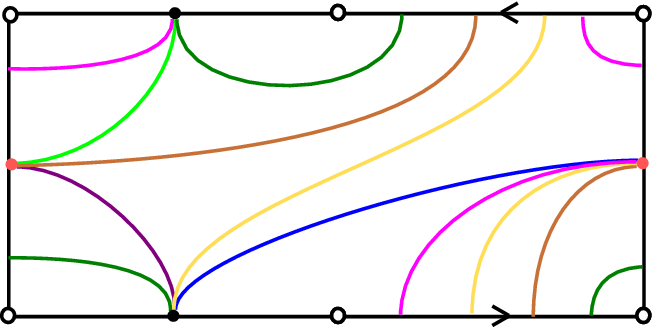}
    \includegraphics[scale=0.7]{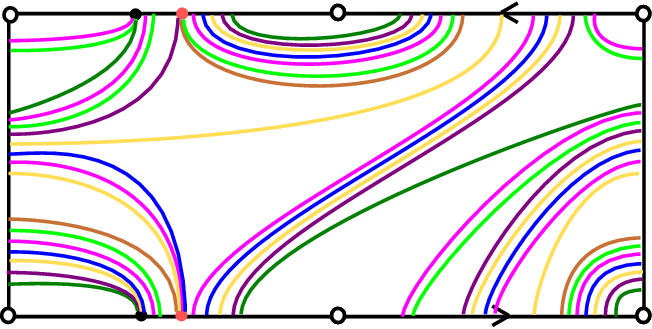}
    \caption{A train track embedded into $N_{3,1}$ in two different
      ways. }
    \label{fig:tt_five}
  \end{figure}

  The second picture shows the same train track, embedded in $N_{3,1}$
  in a different way. The map from $\tau$ to this second train track
  extends to a homeomorphism of the surface which well-defined up to
  homotopy. In other words, \Cref{fig:tt_five} describes some
  $f \in \Mod(N_{3,1})$ and the second train track is $f(\tau)$.

  Note that $f(\tau)$ is carried on $\tau$ and the
  matrix describing the $f$-action on the branches of $\tau$ is
  \begin{displaymath}
    \begin{pmatrix}
      1 & 1 & 1 & 1 & 0 & 0 & 1 \\
      1 & 1 & 0 & 1 & 0 & 0 & 0 \\
      0 & 1 & 0 & 1 & 0 & 0 & 1 \\
      0 & 1 & 0 & 1 & 1 & 0 & 0 \\
      0 & 0 & 1 & 1 & 0 & 0 & 1 \\
      0 & 0 & 0 & 1 & 0 & 0 & 0 \\
      0 & 0 & 1 & 1 & 0 & 1 & 1 \\
    \end{pmatrix}.
  \end{displaymath}

  This matrix is Perron--Frobenius and the minimal polynomial of the
  Perron--Frobenius eigenvalue is $x^5-3x^4-x^3+x^2-x-1$. Hence $f$ is
  a pseudo-Anosov mapping class with degree 5 stretch factor.

\begin{figure}[htb]
    \centering
    \includegraphics[scale=0.6]{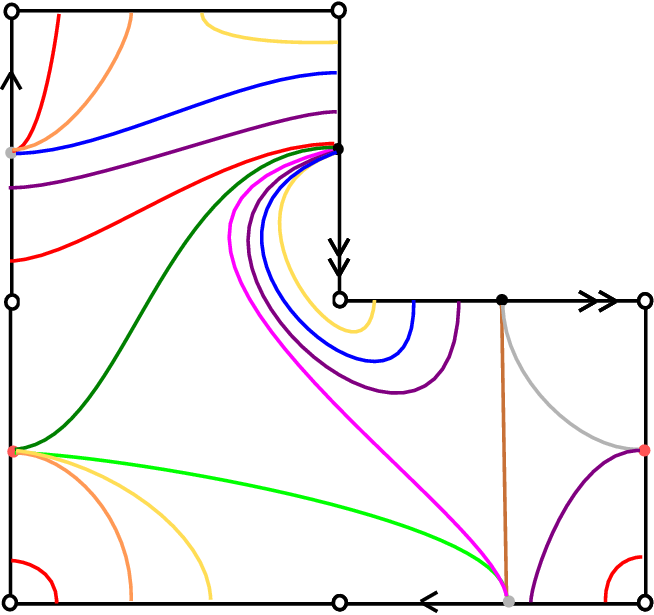}
    \includegraphics[scale=0.6]{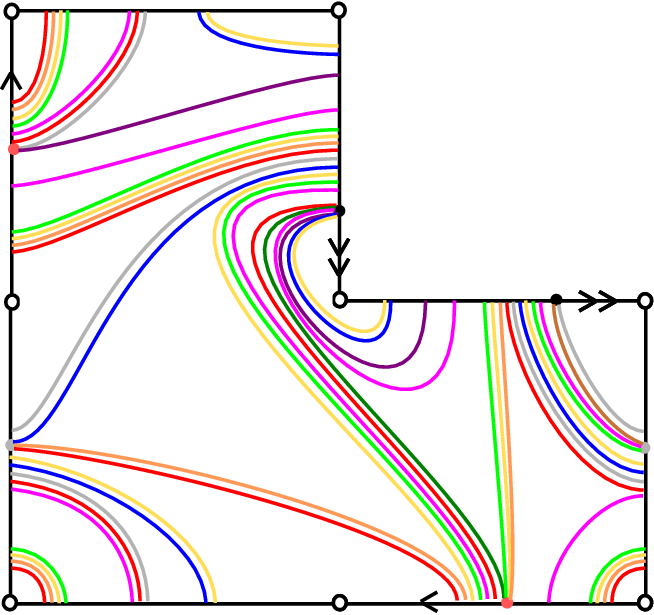}
    \caption{A train track embedded into $N_{4,1}$ in two different
      ways. Segments of the boundaries of the polygons are identified
      by translations except for the sides marked by arrows. The open
      circles on the boundary identify to a single point, representing
      the puncture. }
    \label{fig:tt_six}
  \end{figure}

  For our second example, consider the pictures on
  \Cref{fig:tt_six} which describe an element of
  $\Mod(N_{4,1})$ in the same way as in the previous example. The matrix describing the action on the train track is
  \begin{displaymath}
    \begin{pmatrix}
      1 & 0 & 1 & 1 & 0 & 0 & 0 & 0 & 1 & 1 \\
      0 & 0 & 1 & 0 & 1 & 0 & 0 & 0 & 0 & 1 \\
      0 & 1 & 1 & 1 & 1 & 0 & 0 & 0 & 1 & 0 \\
      0 & 1 & 0 & 1 & 0 & 1 & 0 & 0 & 1 & 0 \\
      0 & 0 & 1 & 1 & 0 & 0 & 0 & 0 & 1 & 1 \\
      0 & 0 & 0 & 0 & 0 & 0 & 0 & 1 & 0 & 0 \\
      0 & 0 & 0 & 1 & 0 & 0 & 0 & 0 & 0 & 0 \\
      1 & 0 & 0 & 0 & 0 & 0 & 1 & 0 & 1 & 0 \\
      0 & 1 & 0 & 0 & 0 & 0 & 1 & 0 & 2 & 0 \\
      0 & 0 & 0 & 0 & 0 & 0 & 0 & 0 & 1 & 0 \\
    \end{pmatrix}
  \end{displaymath}
  This is again Perron--Frobenius and the minimal polynomial of the
  largest eigenvalue is $x^6 - 3x^5 - x^4 - x^2 + x + 1$. Hence the
  mapping class is pseudo-Anosov with degree 6 stretch factor.

  We are not done, because we need a degree 6 example on $N_4$, not on
  $N_{4,1}$. But note that the complementary region of the train track
  containing the puncture is a bigon, hence our pseudo-Anosov map has
  a 2-pronged singularity at the puncture. So we can fill in the
  puncture to obtain a pseudo-Anosov map on $N_4$ with the same
  degree 6 stretch factor.
\end{proof}

  We remark that the two candidate mapping classes, each as a product of three
  Dehn twists, were found with the help of the computer software \texttt{flipper}
  by Mark Bell \cite{flipper}. The train track maps and the transition matrices
  were then computed entirely by hand. We have mentioned this in the proof already but we would like to reiterate that verifying the correctness of the examples is a much easier process than finding them, and this process can also be done---and have been done---without using computers: one only needs to check that in each case the two train tracks that we claim to be isomorphic are indeed isomorphic and then use the pictures to verify that the transition matrices are also as shown above.

\section{Proofs of the main theorems}
\label{sec:proofs}

In this section, we give the proofs of the main theorems.

\subsection{Bounds on the algebraic degree}
\label{sec:bounds}

Determining the sets $D(S)$ and $D^+(S)$ has two parts: ruling out the integers that do not arise as degrees and constructing examples realizing integers that have not been ruled out. The first part has almost entirely been done before, this section is for summarizing the relevant results.

\paragraph{General bounds.}
The first upper bound on the degree is due to Thurston \cite{Thurston88}. Recall that $\Teich(S)$ stands for Teichm\"uller space of the surface $S$.

\begin{proposition}\label{prop:bound}
  For any finite type surface $S$, we have
  \begin{displaymath}
    \max D(S) \le \dim(\Teich(S)).
  \end{displaymath}
\end{proposition}

Next, we give a bound on the largest degree in $D^+(S)$. This was certainly also known before, but since we have not found a proof a literature, we include one here.

First we need a fact about degrees of powers of algebraic numbers.
The algebraic degree of some algebraic numbers---for example,
$\sqrt{2}$---decreases under taking powers. We will use the fact that
this does not happen for pseudo-Anosov stretch factors.

\begin{lemma}\label{lemma:degree-under-powers}
  If $\lambda$ is a pseudo-Anosov stretch factor, then $\deg(\lambda) =
  \deg(\lambda^k)$ for every positive integer $k$.
\end{lemma}
\begin{proof}
  Let
  $p(x) = (x-\lambda)(x-\lambda_2) \cdots (x-\lambda_n) \in \QQ[x]$ be
  a minimal polynomial of $\lambda$, factored over the complex numbers.
  Note that $n = \deg(\lambda)$.

  Consider the polynomial
  $p_k(x) = (x-\lambda^k)(x-\lambda_2^k) \cdots (c-\lambda_n^k)$. The
  group $G = Aut(\QQ(\lambda)/\QQ)$ of automorphisms of the number field
  $\QQ(\lambda)$ permutes the roots of $p_k(x)$, hence acts trivially on the coefficients. It
  follows that $p_k(x) \in \QQ[x]$. We obtain that
  $\deg(\lambda^k) \le \deg(\lambda)$ for every positive integer $k$. This holds in general, we have not yet used the assumption that
  $\lambda$ is a pseudo-Anosov stretch factor.

  For the reverse inequality, it suffices to show that $p_k(x)$ is
  irreducible over $\QQ$ for every positive integer $k$ if $\lambda$
  is a pseudo-Anosov stretch factor. If this was not the case, the length
  of the $G$-orbit of $\lambda^k$ would be less than $n$, so there
  would be a nontrivial element of $G$ fixing $\lambda^k$. As a
  consequence, $|\lambda_i| = |\lambda|$ would hold for some
  $2 \le i \le n$. This is impossible, since $\lambda$ is the unique
  largest root of $p(x)$ in absolute value.
 \end{proof}

 The second lemma we will use relates degrees on punctured surfaces to degrees on closed surfaces. Recall that $\overline{S}$ denotes the closed surface obtained from $S$ by filling in the punctures.

\begin{lemma}\label{lemma:punctures-dont-matter}
  For any finite type surface $S$, we have $D^+(S) = D^+(\overline{S})$.
\end{lemma}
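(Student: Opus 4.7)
The plan is to prove both inclusions in $D^+(S) = D^+(\overline{S})$.

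For $D^+(S) \subseteq D^+(\overline{S})$: given a pseudo-Anosov $f$ on $S$ with transversely orientable invariant foliations $\calF^u, \calF^s$ and stretch factor $\lambda$ of degree $d$, I would extend $f$ to a homeomorphism $\overline{f}$ of $\overline{S}$ by sending each puncture to its image puncture. The foliations $\calF^u, \calF^s$ extend to singular measured foliations on $\overline{S}$: each puncture is a $2k$-prong singularity (with $k \ge 1$, since transverse orientability forces even prong numbers), which upon filling becomes either a regular point ($k = 1$) or a genuine $2k$-prong interior singularity ($k \ge 2$). No forbidden $1$-prong interior singularities arise, the extended foliations remain transversely orientable, and $\overline{f}$ stretches them by $\lambda$ and $\lambda^{-1}$. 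Hence $\overline{f}$ is pseudo-Anosov on $\overline{S}$ with the same stretch factor, giving $d \in D^+(\overline{S})$.

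For $D^+(\overline{S}) \subseteq D^+(S)$: let $f$ be pseudo-Anosov on $\overline{S}$ with transversely orientable foliations and $\lambda$ of degree $d$, and let $n$ be the number of punctures of $S$. I would find an exponent $k \ge 1$ such that (a) $f^k$ has at least $n$ fixed points and (b) $\deg(\lambda^k) = d$. Puncturing $\overline{S}$ at $n$ fixed points of $f^k$ yields a surface homeomorphic to $S$, on which $f^k$ restricts to a pseudo-Anosov map whose foliations are the restrictions of those of $f^k$ (hence still transversely orientable) and whose stretch factor $\lambda^k$ has degree $d$. This places $d$ in $D^+(S)$.

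It remains to argue that such a $k$ exists. Condition (a) holds for all large $k$ since the number of period-$k$ points of a pseudo-Anosov map grows like $\lambda^k$ (topological entropy equals $\log \lambda$). For (b), let $\lambda_1 = \lambda, \lambda_2, \ldots, \lambda_d$ be the Galois conjugates of $\lambda$. Then $\deg(\lambda^k) < d$ iff $\lambda_i^k = \lambda_j^k$ for some $i \ne j$, i.e., $\lambda_i/\lambda_j$ is a root of unity whose order divides $k$. For each pair $(i,j)$ with $\lambda_i/\lambda_j$ a nontrivial root of unity of order $m_{ij}$, the bad $k$'s form the proper subgroup $m_{ij}\ZZ \subsetneq \ZZ$; pairs with $\lambda_i/\lambda_j$ not a root of unity contribute no bad $k$ at all. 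The bad set is thus a finite union of proper subgroups, whose complement has positive density and in particular is infinite. Intersecting this infinite set with the cofinite set of $k$ satisfying (a) produces the required $k$. The only nontrivial step is this Galois-theoretic existence of $k$, which is the main (though elementary) obstacle.
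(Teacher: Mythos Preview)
Your proof is correct and follows essentially the same two-step approach as the paper: extend over the punctures for $D^+(S)\subseteq D^+(\overline{S})$ (using that transverse orientability rules out $1$-prongs), and for the reverse inclusion pass to a power with enough fixed points and puncture there.

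The only difference is in the second step. The paper simply asserts that ``the algebraic degree is preserved under powers'' and takes any sufficiently large $k$. You instead prove the weaker statement that \emph{some} $k$ works for both conditions simultaneously, via the Galois-theoretic argument that the set of bad $k$ (those with $\lambda_i^k=\lambda_j^k$ for some $i\ne j$) is a finite union of proper subgroups $m_{ij}\ZZ$ of $\ZZ$, hence has infinite complement. Your version is more careful: the bare claim $\deg(\lambda^k)=\deg(\lambda)$ for all $k$ is not obvious for arbitrary algebraic integers (it fails whenever two conjugates differ by a root of unity), and while it may hold for pseudo-Anosov stretch factors, the paper does not justify it. Your argument sidesteps this by only needing infinitely many good $k$, which is easy.
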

\begin{proof}
  If $\psi$ is a pseudo-Anosov map on $S$ with an orientable invariant
  foliation, then it has no 1-pronged singularities. So the
  pseudo-Anosov map on $\overline{S}$ obtained by extending $\psi$ to
  the punctures is still pseudo-Anosov and has the same stretch factor
  as $\psi$. This shows that $D^+(S) \subset D^+(\overline{S})$.

  If $\psi$ is a pseudo-Anosov map on $\overline{S}$ with stretch
  factor $\lambda$, then some power of it has at least as many fixed points as
  the number of punctures of $S$. Hence there is a pseudo-Anosov map
  on $S$ whose stretch factor is some power of $\lambda$. By
  \Cref{lemma:degree-under-powers}, the algebraic degree of $\lambda$
  is preserved under powers, hence
  $D^+(\overline{S}) \subset D^+(S)$.
\end{proof}

\begin{proposition}\label{prop:plus-bound}
  For any finite type surface $S$, we have
  \begin{displaymath}
    \max D^+(S) \le \dim(H^1(\overline{S},\RR)).
  \end{displaymath}
\end{proposition}
\begin{proof}
  By \Cref{lemma:punctures-dont-matter} it suffices to show that
  $\max D^+(S) \le \dim(H^1(S,\RR))$ if $S$ is closed. This follows
  from the fact that the orientable invariant foliation corresponds to
  a 1-form on $S$, so it is an eigenvector of the action of the
  $H^1(S,\RR)$.
\end{proof}

\paragraph{Bounds on odd degrees.} The next group of results is on restrictions on odd degrees.

\begin{proposition}\label{prop:odd-implies-less-than-half}
  If $S$ is orientable, $d \in D(S)$ and $d$ is odd, then
  $d \le \frac12 \dim(\Teich(S))$.
\end{proposition}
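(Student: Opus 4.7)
The plan is to follow the McMullen-type argument of exhibiting a symplectic action of $\psi$ with $\lambda$ as an eigenvalue, on a real vector space of dimension at most $\dim(\Teich(S))$, and then exploit the reciprocity of its characteristic polynomial.

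Let $\psi$ be a pseudo-Anosov representative with stretch factor $\lambda$, and let $p(x)\in\ZZ[x]$ be its minimal polynomial, with $\deg p = d$ odd. First I would show $p$ is not reciprocal: any odd-degree reciprocal polynomial satisfies $p(-1)=(-1)^d p(-1)=-p(-1)$ and hence $p(-1)=0$, forcing $p(x)=x+1$ by irreducibility---contradicting $\lambda>1$. Consequently $\tilde p(x):=x^d p(1/x)$ is an irreducible element of $\ZZ[x]$ distinct from $p(x)$.

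Next, let $\pi\colon \hat S\to S$ be the canonical orientation double cover determined by the invariant quadratic differential of $\psi$ (if the foliations are already transversely orientable then $\hat S = S\sqcup S$, and the argument below reduces to the $D^+(S)$ bound given by \Cref{prop:plus-bound} applied to $H^1(S;\RR)$ directly, which is stronger than the claimed inequality). The horizontal foliation lifts to an orientable foliation on $\hat S$, and its cohomology class $[\hat{\calF}^u]$ lies in $V:=H^1(\hat S,\hat\Sigma;\RR)^{-}$, the $(-1)$-eigenspace of the deck involution $\iota^*$ on the relative cohomology, where $\hat\Sigma$ is the preimage of the singularity locus together with the punctures of $S$; anti-invariance holds because $\iota$ reverses the chosen transverse orientation of $\hat{\calF}^u$. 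A lift $\hat\psi$ of $\psi$ preserving the foliation acts on $V$ with $\hat\psi^*[\hat\calF^u]=\lambda[\hat\calF^u]$, and the (relative) intersection pairing of $\hat S$ restricts to a symplectic form on $V$ invariant under $\hat\psi^*$.

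A Riemann--Hurwitz and long-exact-sequence computation identifies $\dim_\RR V$ with the complex dimension of the stratum of quadratic differentials containing the invariant differential of $\psi$, which is in turn bounded above by $\dim_\CC \mathrm{QD}(S)=\dim_\RR\Teich(S)$. Since $\hat\psi^*|_V$ is symplectic, its characteristic polynomial $\chi(x)\in\ZZ[x]$ is reciprocal. Both $p$ and $\tilde p$ divide $\chi$; being distinct irreducible factors their product divides $\chi$ as well, so $2d\le\deg\chi=\dim V\le\dim\Teich(S)$, giving the desired bound.

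The main obstacle is arranging the dimension count so that the symplectic space is exactly as large as $\dim\Teich(S)$: working naively with $H^1(\hat S;\RR)^-$ loses contributions from non-branched (even-prong) singularities and from punctures, so the relative cohomology of the pair $(\hat S,\hat\Sigma)$ must be used, and one must verify that the anti-invariant part of the boundary contribution is compatible with the inequality above.
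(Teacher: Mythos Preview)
The paper does not actually supply a proof of this proposition; it simply attributes the closed case to Long and the punctured case to McMullen's argument recorded in \cite{Shin16}. Your sketch is a reasonable reconstruction of the McMullen approach: show that odd degree forces the minimal polynomial $p$ to be non-reciprocal, exhibit $\lambda$ as an eigenvalue of a symplectic integer matrix acting on a space of dimension at most $\dim\Teich(S)$, and conclude that both $p(x)$ and $x^d p(1/x)$ divide the (reciprocal) characteristic polynomial.

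There is, however, a genuine technical gap in your version, and it is precisely the one you flag at the end. You take $V = H^1(\hat S,\hat\Sigma;\RR)^{-}$ and assert that the intersection pairing restricts to a symplectic form on it. But relative cohomology does not carry a natural self-pairing: Lefschetz/Poincar\'e duality pairs $H^1(\hat S,\hat\Sigma)$ with $H^1(\hat S\setminus\hat\Sigma)$, not with itself, and the naive cup product on relative cohomology is degenerate exactly along the boundary classes you were hoping to include. So the step ``$\hat\psi^*|_V$ is symplectic, hence $\chi$ is reciprocal'' does not go through as written.

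The fix is to use a space that is genuinely symplectic of the right dimension. One option is the Thurston symplectic form on the weight space of a maximal $\psi$-invariant train track (equivalently, the tangent space to $\mathcal{ML}(S)$), which has dimension $\dim\Teich(S)$ and on which the transition matrix acts symplectically; this is the version closest to the paper's own treatment in \Cref{sec:properties_of_gamma}, where the form $\Delta$ plays the analogous role. Another option is to stay with absolute cohomology $H^1(\hat S;\RR)^{-}$, where the cup product is honestly symplectic, and do the Riemann--Hurwitz count carefully to bound its dimension by $\dim\Teich(S)$. Either route closes the gap; your relative-cohomology formulation does not.
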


For closed surfaces, this result is due to Long \cite[Theorem
3.3]{Long85}. McMullen later gave an different proof \cite[Theorem
10]{Shin16} which works for punctured surfaces as well. An
analogous argument (where the action
on the space of projective measured foliations is replaced by
the action on homology) also yields the following.

\begin{proposition}\label{prop:odd-implies-less-than-half-or}
  If $S$ is orientable, $d \in D^+(S)$ and $d$ is odd, then $d \le
  \frac12 \dim(H^1(\overline{S},\RR))$.
\end{proposition}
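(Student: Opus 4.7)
The plan is to mimic McMullen's proof of \Cref{prop:odd-implies-less-than-half} with the Teichm\"uller space action replaced by the action on first cohomology of $\overline{S}$. By \Cref{lemma:punctures-dont-matter} I may assume at the outset that $S$ is closed, so that any representative $\psi$ of a pseudo-Anosov class with transversely orientable invariant foliations extends to a homeomorphism of $\overline{S}=S$. The transverse orientability of $\calF^u$ is exactly the hypothesis needed to realize $\calF^u$ as a real cohomology class $[\calF^u]\in H^1(S,\RR)$ such that $\psi^*[\calF^u]=\lambda[\calF^u]$ (see the references cited just after \Cref{theorem:trace_fields}). Consequently $\lambda$ is a root of the integer characteristic polynomial $\chi(x)\in\ZZ[x]$ of $\psi^*$ acting on $H^1(\overline{S},\RR)$.

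Next, since $\psi^*$ preserves the cup-product symplectic form on $H^1(\overline{S},\RR)$, its characteristic polynomial $\chi(x)$ is reciprocal, so $\lambda^{-1}$ is a root of $\chi(x)$ as well. Now I would argue, exactly as in the first paragraph of the proof of \Cref{theorem:trace_fields} in \Cref{sec:proofs-of-theorems}, that when $d=\deg(\lambda)$ is odd, $\lambda$ and $\lambda^{-1}$ cannot be Galois conjugates: a $\QQ$-automorphism $\sigma$ of $\QQ(\lambda)$ with $\sigma(\lambda)=\lambda^{-1}$ fixes the subfield $\QQ(\lambda+\lambda^{-1})$, and $\sigma$ is nontrivial because $\lambda>1$, so $[\QQ(\lambda):\QQ(\lambda+\lambda^{-1})]=2$ and $d$ would be even.

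Thus the minimal polynomials of $\lambda$ and of $\lambda^{-1}$ are two distinct monic irreducible factors of $\chi(x)$, each of degree $d$, yielding
\begin{displaymath}
  \dim H^1(\overline{S},\RR)=\deg\chi\ge 2d,
\end{displaymath}
which is the desired inequality $d\le\tfrac12\dim H^1(\overline{S},\RR)$. The only nonroutine step is the cohomological input in the first paragraph---namely, identifying $\lambda$ as an eigenvalue of $\psi^*$ acting on $H^1(\overline{S},\RR)$ under the transverse orientability hypothesis---but this is a well-documented feature of orientable pseudo-Anosov foliations and does not require new work. Everything else is elementary algebraic number theory and the reciprocity of the characteristic polynomial of a symplectic transformation, already used in \Cref{corollary:charpoly_reciprocal}.
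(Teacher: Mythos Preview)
Your proposal is correct and is precisely the argument the paper has in mind: the text does not give a standalone proof of this proposition but simply remarks that ``essentially the same argument'' as McMullen's for \Cref{prop:odd-implies-less-than-half} works, with the symplectic action on $H^1(\overline{S},\RR)$ replacing the action underlying the Teichm\"uller-space bound. Your reduction to the closed case via \Cref{lemma:punctures-dont-matter}, the identification of $\lambda$ as an eigenvalue of $\psi^*$ on $H^1(\overline{S},\RR)$ (cf.\ the proof of \Cref{prop:plus-bound}), the reciprocity of $\chi(\psi^*)$, and the odd-degree parity obstruction to $\lambda$ and $\lambda^{-1}$ being Galois conjugates are exactly the intended ingredients.
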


\Cref{prop:odd-implies-less-than-half,prop:odd-implies-less-than-half-or}
have no analogs for nonorientable surfaces as demonstrated by
\Cref{prop:curves_nonor}.

\paragraph{Two facts about nonorientable surfaces.}

Degree two stretch factors occur on the torus and the four times punctured sphere, so they arise on all higher complexity orientable surfaces as well via puncturing and branched coverings. In sharp contrast, degree two stretch factors do not occur on nonorientable surfaces at all. This requires only a small observation, but as far as we know, this fact has not been noticed before.
\begin{proposition}\label{two-not-degree}
  If $S$ is nonorientable, then $2\notin D(S)$.
\end{proposition}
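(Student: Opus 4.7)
The plan is to pass to the orientation double cover, where one has an orientable surface, and exploit the rigidity of pseudo-Anosovs with quadratic stretch factor.

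Suppose for contradiction that $f\in\Mod(S)$ is pseudo-Anosov with $\deg\lambda(f)=2$, where $S$ is nonorientable. Let $\pi\colon \tilde S\to S$ be the orientation double cover with deck involution $\sigma$, which is orientation-reversing, so that $\tilde S$ is orientable. After replacing $f$ by $f^{2}$ if necessary, $f$ lifts to a pseudo-Anosov $\tilde f\colon\tilde S\to\tilde S$ commuting with $\sigma$, and $\lambda(\tilde f)\in\{\lambda,\lambda^{2}\}$ has algebraic degree $2$ as well.

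On the orientable surface $\tilde S$, pseudo-Anosov maps with quadratic stretch factor are very constrained: the Franks--Rykken theorem (in the transversely orientable case), together with the Gutkin--Judge/Kenyon--Smillie arithmeticity criterion when the trace field is $\QQ$, implies that $\tilde f$ is obtained from a toral Anosov $A\colon T^{2}\to T^{2}$ by lifting along a branched cover $p\colon\tilde S\to T^{2}$. When the invariant foliations of $\tilde f$ are not transversely orientable, one first passes to a further finite cover to orient them; in the present situation this second cover is again orientable and one can track $\sigma$ through the tower.

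Since $\sigma$ commutes with $\tilde f$, it preserves the associated flat structure and hence descends, possibly after enlarging the cover, to an orientation-reversing isometry $\bar\sigma\colon T^{2}\to T^{2}$ commuting with $A$. Such $\bar\sigma$ are very restricted because they must preserve as a set the pair of expanding and contracting eigenlines of $A$, which have irrational slope. An explicit case analysis of orientation-reversing involutions of a flat torus commuting with an Anosov yields a short list of candidates, and in each case the structure of the branched cover $p$ forces the quotient $\tilde S/\sigma = S$ to be orientable, contradicting the hypothesis.

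The main obstacle is making the descent of $\sigma$ and the resulting case analysis rigorous, particularly when the invariant foliations of $\tilde f$ are not transversely orientable to begin with. A possible alternative, avoiding Franks--Rykken altogether, would be a direct Galois-theoretic argument: analyze the action of $\tilde f$ on the $\pm 1$ eigenspaces $H^{\pm}\subset H_{1}(\tilde S,\RR)$ of $\sigma_{*}$, using that the intersection form is anti-invariant under $\sigma_{*}$ (so it pairs $H^{+}$ with $H^{-}$), and derive an integer constraint on the minimal polynomial of $\lambda(\tilde f)$ that rules out degree $2$.
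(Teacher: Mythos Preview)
Your proposal is not a proof: you yourself flag that the descent of $\sigma$ to the torus and the subsequent case analysis are not carried out. These are not minor details. The branched covering $p\colon\tilde S\to T^2$ produced by Franks--Rykken (or by arithmeticity) is \emph{not} canonical, and there is no reason it should be $\sigma$-equivariant; without equivariance there is no map $\bar\sigma$ on $T^2$ to analyze. Passing to a further cover to orient the foliations only compounds the problem, since you must now track $\sigma$ through a tower of covers that were not built with $\sigma$ in mind. Even granting a descent, the claim that every orientation-reversing involution of $T^2$ commuting with an Anosov forces the quotient upstairs to be orientable is an assertion, not an argument.

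The paper's proof is the elementary Galois-theoretic one you allude to at the end but do not pursue. Since $\lambda$ is an algebraic unit, a degree $2$ minimal polynomial must be $x^2\pm kx\pm 1$, so the second root is $\pm\lambda^{-1}$. One then invokes the fact (proved elsewhere by the author, via an argument in the spirit of McMullen's proof of \Cref{prop:odd-implies-less-than-half}) that when the pseudo-Anosov is supported on a nonorientable surface, neither $\lambda^{-1}$ nor $-\lambda^{-1}$ is ever a Galois conjugate of $\lambda$. This is a two-line proof once that input is available. Your ``possible alternative'' is in fact the right strategy; the detour through Franks--Rykken is both harder and, as written, incomplete.
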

\begin{proof}
  If $\deg(\lambda) = 2$, then its minimal polynomial has the form
  $x^2 \pm kx \pm 1$ for some $k \in \ZZ$, since $\lambda$ in
  algebraic unit. But this is impossible, since $\pm 1/\lambda$ is
  never a Galois conjugate of $\lambda$ when the pseudo-Anosov map is
  supported on a nonorientable surface. (This is stated for
  $1/\lambda$ in \cite[Proposition 2.3]{StrennerSAF}, but the
  same proof works for $-1/\lambda$ as well.)
\end{proof}

Finally, we include the following well-known fact for completeness.

\begin{proposition}\label{prop:N3-no-pa}
  The closed nonorientable surface of genus 3 does not admit
  pseudo-Anosov maps.
\end{proposition}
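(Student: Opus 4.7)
\emph{Plan.} The plan is to show that every mapping class of $N_3$ preserves the isotopy class of an essential two-sided simple closed curve; by the Nielsen--Thurston classification (which extends to nonorientable surfaces via work of Wu), such a class is reducible or periodic, and in particular not pseudo-Anosov.

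Decompose $N_3 = M \cup_c T$, where $M$ is a Möbius band, $T \cong S_{1,1}$ is a once-punctured torus, and $c = \partial M = \partial T$ is a two-sided separating simple closed curve. The core of $M$ is a one-sided simple closed curve $\alpha$ whose homology class generates the torsion summand $\ZZ/2$ of $H_1(N_3;\ZZ) = \ZZ^2 \oplus \ZZ/2$. Every $f \in \Mod(N_3)$ acts trivially on this torsion summand (since $\ZZ/2$ has no nontrivial automorphisms), so $f_*[\alpha] = [\alpha]$ on homology.

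I would then invoke the classical presentation of $\Mod(N_3)$ (due to Lickorish and Chillingworth, further clarified by Stukow): $\Mod(N_3)$ is generated by the Dehn twists supported in the once-punctured torus piece $T$ (forming a copy of $\Mod(S_{1,1}) \cong SL(2,\ZZ)$), the Dehn twist $T_c$, and a crosscap slide. The first two types of generators clearly preserve $[c]$ on the nose; for the crosscap slide, which is supported in a neighborhood of $M$ together with an auxiliary simple closed curve meeting $\alpha$ once, a direct isotopy argument shows it preserves $[c]$ as well. Consequently every element of $\Mod(N_3)$ preserves $[c]$, and so is reducible along $c$ or periodic.

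The main obstacle is the combinatorial verification that every generator preserves $[c]$; the statement is classical but the details for the crosscap slide require a careful isotopy argument. An alternative, more analytic route is via the orientation double cover $p\colon S_2 \to N_3$: any hypothetical pseudo-Anosov $f \in \Mod(N_3)$ would lift (after squaring if necessary) to a pseudo-Anosov $\widetilde{f} \in \Mod(S_2)$ commuting with the free orientation-reversing deck involution $\tau$; one then exploits the constraints $\dim\Teich(N_3) = 3$ together with $2 \notin D(N_3)$ (\Cref{two-not-degree}) to force $\deg(\lambda) = 3$, and derives a contradiction from the rigid $\tau$-equivariant structure imposed on the invariant foliations of $\widetilde{f}$.
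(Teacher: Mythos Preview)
Your first approach is correct in outline and reaches the same conclusion as the paper, but via a longer route. The paper's proof is a single sentence: it cites Scharlemann's result that there is a \emph{unique} isotopy class of one-sided simple closed curve on $N_3$ whose complement is a once-holed torus---this is exactly your curve $\alpha$. Uniqueness immediately forces every mapping class to fix $[\alpha]$, hence also $[c]=[\partial N(\alpha)]$, so every mapping class is reducible. This bypasses both the Lickorish--Chillingworth generating set and the generator-by-generator verification you propose. Your route would work (the crosscap slide $Y_{\alpha,a}$ does fix $[\alpha]$ essentially by definition, since it slides the crosscap along $a$ and back), but it trades one nontrivial citation (Scharlemann's curve-uniqueness lemma) for another (the generating set), plus extra bookkeeping.

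Your second, ``analytic'' approach via the double cover $S_2 \to N_3$ has a genuine gap. Forcing $\deg(\lambda)=3$ is fine, but degree $3$ \emph{does} occur in $D(S_2)$ (see \Cref{theorem:degrees_simple}), so there is no contradiction from the degree alone. You gesture at a ``rigid $\tau$-equivariant structure'' on the invariant foliations, but you do not say what the obstruction is, and I do not see one: nothing in the paper's degree bounds or in the standard theory rules out a degree-$3$ pseudo-Anosov on $S_2$ commuting with a free orientation-reversing involution. This route, as written, is not a proof.
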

\begin{proof}
  There is a unique one-sided curve on the surface whose complement is
  a one-holed torus \cite[Lemma 2.1]{Scharlemann82}. This curve is
  fixed by every mapping class.
\end{proof}

\subsection{Proof of the degree theorem}
\label{sec:degree_proof}

We are now ready to state and prove a more general version of
\Cref{theorem:degrees_simple}. We use the notations $[a,b]_\even$ and
$[a,b]_\odd$ from \Cref{theorem:degrees_simple}, and abuse the interval
notation $[a,b]$ to mean the set of integers in the corresponding real
interval.

\begin{theorem}\label{theorem:degrees-general}
  Let $S$ be a finite type orientable surface.
  \begin{enumerate}[(i)]
  \item\label{item:Dp_or} We have
    \begin{equation}\label{eq:degrees-plus}
      D^+(S) = \big[2,\dim(H_1(\overline{S},\RR))\big]_\even \cup
      \left[3,\frac12 \dim(H_1(\overline{S},\RR))\right]_\odd.
    \end{equation}
  \item\label{item:D_or} If $S$ has an even number of punctures or
    $\frac12 \dim(\Teich(S))$ is even, then
    \begin{equation}\label{eq:degrees-good-case}
      D(S) = \big[2,\dim(\Teich(S))\big]_\even \cup \left[3,\frac12 \dim(\Teich(S))\right]_\odd.
    \end{equation}
  \item\label{item:D_or_odd} If $S$ has an odd number of punctures and
    $\frac12 \dim(\Teich(S))$ is odd, then either
    \Cref{eq:degrees-good-case} holds or
    \begin{equation}\label{eq:degrees-bad-case}
      D(S) = \big[2,\dim(\Teich(S))\big]_\even \cup \left[3,\frac12 \dim(\Teich(S))-1\right]_\odd.
    \end{equation}
  \end{enumerate}
  Let $N$ be a finite type nonorientable surface.
  \begin{enumerate}[(i)]
    \setcounter{enumi}{3}
  \item\label{item:Dp_nonor} We have
    \begin{displaymath}
      D^+(N) = \big[3,\dim(H_1(\overline{N},\RR))\big].
    \end{displaymath}
  \item\label{item:D_nonor} If $N$ is not the closed
    nonorientable surface $N_3$ of genus 3, then
    \begin{displaymath}
      D(N) = \big[3,\dim(\Teich(N))\big].
    \end{displaymath}
  \end{enumerate}
\end{theorem}
The reason why the cases (\ref{item:D_or}) and (\ref{item:D_or_odd})
are separated is that we cannot realize the odd degree
$\frac12 \dim(\Teich(S))$ when the surface is not a double cover of a
nonorientable surface. It seems possible that realizing this degree is
not possible using Penner's construction. We do not know if it is
possible using other constructions. Randomized computer experiments
yield almost exclusively even degree stretch factors, so searching for
these degrees with computers also seems a nontrivial task.

The exclusion of the surface $N_3$ is necessary, since
$\dim(\Teich(N_3)) = 3$, so the interval
$\left[3,\dim(\Teich(N_3))\right]$ is nonempty. However, this surface
does not admit pseudo-Anosov maps (\Cref{prop:N3-no-pa}).
\begin{proof}
  The fact that the sets cannot be larger than stated follows from the
  results in \Cref{sec:bounds}. It remains to prove that all the
  degrees claimed in the theorem can be realized.

  The statements (\ref{item:Dp_nonor}) and (\ref{item:D_nonor}) follow
  from \Cref{theorem:curves-and-degree} and \Cref{prop:curves_nonor} with
  the exception of the two sporadic cases: $6 \in D(N_4)$ and
  $5 \in D(N_{3,1})$. (Compare \Cref{table:nonor-pAs} and
  \Cref{table:nonor-curves}.) Examples for these two cases were given
  in \Cref{sec:two_examples}.

  \begin{table}[ht]
    \centering
    \begin{tabular}{c|ccccc}
      n$\backslash$g & 1 & 2 & 3 & 4 & 5 \\
      \hline
      0   & $\emptyset$ & $\emptyset$ & $\emptyset$ & A & A \\
      1   & $\emptyset$ & $\emptyset$ & A & A & A \\
      2   & $\emptyset$ & A & A & A & A \\
      3   & A & A & A & A & A \\
    \end{tabular}
    \caption{Nonorientable surfaces admitting (A) and not
      admitting ($\emptyset$) pseudo-Anosov maps.}
    \label{table:nonor-pAs}
  \end{table}

  The construction of even degrees for (\ref{item:Dp_or}),
  (\ref{item:D_or}) and (\ref{item:D_or_odd}) follow from
  \Cref{theorem:curves-and-degree} and
  \Cref{prop:examples_of_filling_pairs}.

  It remains to construct odd degrees on orientable surfaces. We obtain these by lifting pseudo-Anosov maps from nonorientable
  surfaces. Lifting preserves the stretch factor hence also the
  degree.

  Suppose $S$ is an orientable surface with an even number of
  punctures, and let $S \to N$ be a covering where $N$ is a
  nonorientable surface. We have $\dim(\Teich(S)) = 2 \dim(\Teich(N))$
  and $\dim(H_1(\overline{S})) = 2 \dim(H_1(\overline{N}))$. If
  $S \ne S_2$, then $N \ne N_3$, and it follows that
  $[3,\frac 12 \dim(\Teich(S))] \subset D(S)$ and
  $[3,\frac 12 \dim(H_1(\overline{S}))] \subset D^+(S)$. This proves
  \Cref{eq:degrees-plus} and \Cref{eq:degrees-good-case} when $S$ has
  an even number of punctures and $S \ne S_2$. In the case $S = S_2$,
  we only need to show that $3 \in D(S)$. This is shown by an example
  in Section 7 of \cite{Shin16}. Note that \Cref{lemma:punctures-dont-matter}
  implies that \Cref{eq:degrees-plus} holds also when $S$ has an odd
  number of punctures. This completes the proof of (\ref{item:Dp_or}).

  Finally, suppose $S$ is an orientable surface that has an odd
  number of punctures. Let $S'$ be the surface obtained from $S$ by filling in one puncture. We have already shown that \Cref{eq:degrees-good-case} holds for $S'$, and we will use this to show that it also holds (or almost holds) for $S$.

  To prove that \Cref{eq:degrees-good-case} holds for $S$ when $\frac12 \dim(\Teich(S))$ is even, it suffices to show that the left hand side contains the right hand side. Note that since $\frac12 \dim(\Teich(S))$ is even, there is no difference between the right hand sides of \Cref{eq:degrees-good-case} for $S$ and $S'$. On the other hand, the argument in the proof of \Cref{lemma:punctures-dont-matter} shows that $D(S') \subset D(S)$. So since $D(S')$ is already proven to contain the right hand side, the same is true for $D(S)$. This completes the proof of statement (\ref{item:D_or}).

  Now suppose $\frac12 \dim(\Teich(S))$ is odd. By an analogous argument, the fact that $D(S)$ contains the right hand side of \Cref{eq:degrees-bad-case} follows from the fact that $D(S') \subset D(S)$ and the fact that $D(S')$ contains the right hand side. This completes the proof of (\ref{item:D_or_odd}) and hence the proof of the theorem.
\end{proof}

\subsection{Proof of the trace field theorem}
\label{sec:trace_field_proof}

In this section, we give the proof of \Cref{theorem:trace_fields}. First we need the following lemma which will be used to show that the stretch factor $\lambda$ and its reciprocal $\lambda^{-1}$ tend to be Galois conjugates when they arise from Penner's construction on an orientable surface.

\begin{lemma}\label{lemma:reciprocal}
  Suppose the surface $S$ is orientable and let $\Omega$ be the
  intersection matrix of a collection of curves satisfying the
  hypotheses of Penner's construction. If $M$ is a product of the
  matrices $Q_i$ with Perron--Frobenius eigenvalue $\lambda$, then
  $\lambda^{-1}$ is also an eigenvalue of $M$.
\end{lemma}
\begin{proof}
  Recall from the proof of \Cref{prop:multiplicity-of-one} the linear map $\ell_M: \hV \to \hV$ induced by the left action of $M$ on $\hV = \RR^n/\Nul(\Omega)$. We will show that $\ell_M$ is a symplectic transformation. Since $\lambda$ is an eigenvalue of $\ell_M$ and eigenvalues of symplectic transformations come in reciprocal pairs \cite[Chapter 2]{MeyerHallOffin09}, this will complete the proof.

  Since $S$ is orientable, our collection of curves is a union of two
  multicurves $A$ and $B$. Therefore $\Omega$ has the block form
  \begin{displaymath}
    \Omega = \begin{pmatrix}
      0 & X \\
      X^T & 0 \\
    \end{pmatrix}
  \end{displaymath}
  where $X$ is an $a\times b$ matrix where $a$ and $b$ are the number
  of curves in $A$ and $B$, respectively.

  Define the alternating bilinear form
  $\langle \,\cdot\,,\,\cdot\, \rangle_\Delta$ on $\RR^n$ by the formula
  \begin{displaymath}
    \langle \vv_1,\vv_2 \rangle_\Delta = \vv_1^T\Delta \vv_2,
  \end{displaymath}
  where
  \begin{displaymath}
    \Delta =
    \begin{pmatrix}
      0 & X \\
      -X^T & 0 \\
    \end{pmatrix}.
  \end{displaymath}
  The matrices $\Omega$ and $\Delta$ are related by the equations
  $\Delta = U\Omega = -\Omega U$, where
  \begin{equation}\label{eq:U}
    U =
    \begin{pmatrix}
      I_a & 0 \\
      0 & -I_b \\
    \end{pmatrix}
  \end{equation}
  and $I_a$ and $I_b$ are the $a\times a$ and $b\times b$ identity
  matrices.

  Next we show that $\langle \,\cdot\,,\,\cdot\, \rangle_\Delta$
  descends to a symplectic form on $\hV$. If
  $\vv_0,\vv'_0 \in \Nul(\Omega)$, then
  \begin{displaymath}
    (\vv' + \vv'_0)^T \Delta (\vv + \vv_0) = \vv'^T\Delta \vv,
  \end{displaymath}
  because $\Delta \vv_0 = U\Omega \vv_0 = 0$ and
  $\vv'^T_0\Delta = -\vv'^T_0\Omega U = 0$. As a consequence, $\Delta$
  gives rise to a well-defined alternating form on $\hV$ that we also
  denote by $\langle \,\cdot\,,\, \cdot\, \rangle_\Delta$. This form
  is nondegenerate, because for any
  $\vv \in \RR^n - \Nul(\Omega)$, we have $\Delta\vv\ne 0$,
  therefore there exists $\vv' \in \RR^n$ with
  $\vv'^T\Delta \vv \ne 0$. Hence
  $\langle \,\cdot\,,\, \cdot\, \rangle_\Delta$ is a symplectic form
  on $\hV$.

  Finally, we show that the left actions of the $Q_i$ preserve this
  symplectic form on $\hV$. Using the fact that the diagonal matrices
  $U$ and $D_i$ commute, we have
  \begin{align*}
    Q_i^T \Delta Q_i &= (I+\Omega D_i)U \Omega (I + D_i \Omega) = \\
    &= U\Omega
    + \Omega D_i U\Omega + U\Omega D_i \Omega + \Omega D_i U \Omega D_i
    \Omega = \\ &= \Delta -U \Omega D_i \Omega + U \Omega D_i \Omega +
    \Omega U D_i \Omega D_i \Omega = \Delta,
  \end{align*}
  where the last term vanishes because the diagonal entries of
  $\Omega$ are zero, so $D_i \Omega D_i = 0$.
\end{proof}

\begin{proof}[Proof of \Cref{theorem:trace_fields}]
  All even degrees between 2 and $6g-6$ can be realized on $S_g$ as
  the algebraic degree of stretch factors by
  \Cref{theorem:degrees-general}. The construction of these examples
  occurs on the surfaces $S_g$ themselves (as opposed to odd degrees
  that are constructed by lifting from nonorientable surfaces). By
  \Cref{lemma:reciprocal}, both $\lambda_k$ and $\lambda_k^{-1}$ are
  eigenvalues of the matrices $M_{\gamma,k}$ in
  \Cref{theorem:construction-of-degree-r}, so $\lambda_k$ and
  $\lambda_k^{-1}$ are Galois conjugates if $k$ is large enough. It
  follows that $\QQ(\lambda) : \QQ(\lambda + \lambda^{-1}) = 2$, and
  we obtain that the trace field $\QQ(\lambda + \lambda^{-1})$ can
  have any degree from 1 to $3g-3$.

  On the other hand, the degree of the trace field cannot be larger
  than $3g-3$. If it were, then we would have $\QQ(\lambda) : \QQ(\lambda + \lambda^{-1}) = 1$, otherwise the degree of $\lambda$ would be bigger than $6g-6$, contradicting \Cref{prop:bound}. In other words, $\lambda$ and $\lambda^{-1}$ would not be Galois conjugates. But if they are not Galois conjugates, then
  McMullen's argument that proves
  \Cref{prop:odd-implies-less-than-half} would prove that
  $\deg(\lambda) \le 3g-3$.
\end{proof}

\bibliographystyle{alpha}
\bibliography{../mybibfile}

\end{document}